\newtheorem{theorem}{Theorem}[section]
\newtheorem{lemma}[theorem]{Lemma}
\theoremstyle{definition}
\theoremstyle{remark}
\newtheorem{remark}[theorem]{Remark}
\numberwithin{equation}{section}
\newcommand{\mathbbm}[1]{\text{\usefont{U}{bbm}{m}{n}#1}}
\newcommand{\hx}{\hat x}
\newcommand{\hX}{\hat X}
\newcommand{\R}{\mathbb{R}}
\begin{document}

\title{Derivative-free discrete gradient methods}

\author{Håkon Noren Myhr}
\address{Department of Mathematical Sciences, Norwegian University of Science and Technology, Trondheim, Norway}
\email{hakon.noren@ntnu.no}
\thanks{The research was supported by the Research Council of Norway, through the projects DynNoise (No.
339389) and PhysML (No. 338779).}

\author{Sølve Eidnes}
\address{Department of Mathematics and Cybernetics, SINTEF Digital, 0373 Oslo, Norway}
\email{solve.eidnes@sintef.no}

\subjclass[2020]{Primary 65L99, Secondary 65L12}
\date{August 15, 2023}
\keywords{Author Accepted Manuscript version of the paper by Håkon Noren Myhr in Journal of Computational Dynamics, Vol 2024, DOI 10.3934/jcd.2024004}

\begin{abstract}
    Discrete gradient methods are a class of numerical integrators producing solutions with exact preservation of first integrals of ordinary differential equations. In this paper, we apply order theory combined with the symmetrized Itoh--Abe discrete gradient and finite differences to construct an integral-preserving fourth-order method that is derivative-free. The numerical scheme is implicit and a convergence result for Newton's iterations is provided, taking into account how the error due to the finite difference approximations affects the convergence rate. Numerical experiments verify the order and show that the derivative-free method is significantly faster than obtaining derivatives by automatic differentiation. Finally, an experiment using topographic data as the potential function of a Hamiltonian oscillator demonstrates how this method allows the simulation of discrete-time dynamics from a Hamiltonian that is a combination of data and analytical expressions.

\end{abstract}

\maketitle

\section{Introduction}

For an ODE 
\begin{equation}
    \frac{d}{dt} x = f(x), \quad f : \R^n \rightarrow \R^n,
    \label{eq:ODE}
\end{equation}
a first integral is a function $G : \R^n \rightarrow \R$ that remains constant along a solution trajectory $x(t)$ of \eqref{eq:ODE}, i.e.\ $G(x(t)) = G(x(t_0))$ for all $t>t_0$. A first integral could represent conservation laws for physical quantities, such as the energy and angular momentum in the Kepler problem \cite[Ch. I.2]{Hairer06} or the total mass in a chemical reaction \cite[Ch. IV.1]{Hairer06}. A particular example of a first integral is the Hamiltonian $H : \R^{2d} \rightarrow \R$ of a canonical Hamiltonian system, which is a system of the form 
\begin{equation}
    \dot x = S\nabla H(x), \quad \text{where} \; S := \begin{bmatrix}
      0 & I\\
      -I & 0
    \end{bmatrix} \in \R^{2d \times 2d},
    \label{eq:hamiltonian_ODE}
\end{equation}
with $I$ being the $d \times d$ identity matrix.
 That the Hamiltonian is a first integral can be seen by calculating
\begin{equation}
    \frac{d}{dt}H(x) = \nabla H(x)^T S \nabla H(x) = 0.
    \label{eq:energy_preservation}
\end{equation}
This is true for any system $\dot x = S\nabla H(x)$ where $S$ is a skew-symmetric matrix, and not only for canonical Hamiltonian systems. In physics, one could think of $H(x)$ as representing the total energy of the system and thus canonical Hamiltonian systems could be used to represent systems with energy conservation. Hamiltonian systems are also used in the setting of computational statistics when sampling from a probability distribution \cite{nealMCMCUsingHamiltonian2011}. Here, the volume preservation (symplecticity \cite[Ch. VI]{Hairer06}) of the Hamiltonian dynamics simplifies the sampling procedure and allows for more efficient sampling algorithms called Hamiltonian Monte Carlo \cite{nealMCMCUsingHamiltonian2011}.

In this paper, we will study and construct methods that preserve first integrals such as $H(x)$ under discretization in time, while not relying on derivatives of $H$.

\subsection{Discrete gradient methods}
In some cases, first integrals are preserved by Runge--Kutta methods. For instance, any Runge--Kutta method preserves linear first integrals \cite{SHAMPINE19861287} and symplectic Runge--Kutta methods preserve quadratic invariants: $H(x) = x^TAx$ when $A$ is square and symmetric matrix \cite{cooperStabilityRungeKuttaMethods1987a}. However, for $n > 2$, no Runge--Kutta method can preserve all polynomial invariants of degree $n$ \cite{celledoniEnergypreservingRungeKuttaMethods2009}. Discrete gradient methods, on the other hand, are numerical integrators specifically constructed to preserve first integrals of a general form. They were defined formally for the first time in \cite{gonzalezTimeIntegrationDiscrete1996a}. A discrete gradient (DG) of a first integral $H$ is a function $\overline \nabla H : \R^n \times \R^n \rightarrow \R^n$ that satisfies the properties
\begin{align*}
    \overline \nabla H(x,\hx)^T(\hx - x) &=  H(\hx) - H(x),\\
    \overline \nabla H(x,x) &= \nabla H(x).
\end{align*}
Note that, as remarked in \cite{eidnes2022order}, the second property follows from the first if we require the discrete gradient to be differentiable in the second argument. A discrete gradient method (DGM) is a time-stepping scheme for a skew-gradient system, providing approximations $\hx \approx x(t+h)$ given $x \approx x(t)$ by
\begin{equation}
    \hx = x + h S \overline \nabla H(x,\hx).
    \label{eq:general_dgm}
\end{equation}
DGMs preserve invariants since, by the skew-symmetry of $S$, we have that
\begin{align*}
    H(\hx) - H(x) &= \overline \nabla H(x,\hx)^T(\hx - x)\\
    &= h \overline \nabla H(x,\hx)^T  S \overline \nabla H(x,\hx) = 0,
\end{align*}
which could be seen as a discrete version of \eqref{eq:energy_preservation}.

These methods rely on approximations of the gradient $\overline \nabla H \approx \nabla H$, for which there exist several explicitly defined examples. The most common are the midpoint \cite{gonzalezTimeIntegrationDiscrete1996a} and average vector field (AVF) \cite{hartenUpstreamDifferencingGodunovType1983} discrete gradients, which are second-order approximations of the gradient, and the Itoh--Abe discrete gradient \cite{itoh1988hamiltonian}, which is of first order. 

\subsection{Derivative-free numerical algorithms}
Derivative-free numerical algorithms are useful in the context of optimization when seeking to minimize a function even if the gradient is not available. In optimization research, multiple authors have argued that using finite-difference approximations to replace exact derivatives is prohibitively expensive and sensitive to noise. However, the authors of \cite{shiNumericalPerformanceDerivativeFree2021} demonstrate that finite-difference approximations replacing derivatives within for instance the L-BFGS algorithm are competitive to well-known derivative-free optimization algorithms. Furthermore, optimization libraries such as the Python-based SciPy library \cite{2020SciPy-NMeth} apply finite-difference approximations to approximate Jacobian and Hessian matrices when these are not supplied by the user and needed in specific algorithms. Specifically, finite-difference approximations used to approximate the Jacobian in Newton's method are discussed in \cite{kelleyIterativeMethodsLinear1995a,dennis_schnabel_optimization}. In the setting of numerical integration and preservation of first integrals, the Itoh--Abe discrete gradient represents a derivative-free approximation to the gradient of $H(x)$. Indeed, it is observed by \cite{miyatakeEquivalenceSORtypeMethods2018} that the Itoh--Abe discrete gradient method for gradient systems is equivalent to the derivative-free optimization method, coordinate descent, when applied to solve linear systems. Considering the issue of obtaining samples from probability distributions where direct sampling is difficult, Itoh--Abe discrete gradients have been leveraged within the Hamiltonian Monte Carlo scheme to enable a derivative-free approach \cite{mcgregorConservativeHamiltonianMonte2022}.

\section{Derivative-free discrete gradient methods}

The original discrete gradient method \eqref{eq:general_dgm} is restricted to have the same order as the discrete gradient $\overline \nabla H$, but higher-order extensions have also been developed \cite{McLaren04integral, Quispel08new}. A general order theory to construct discrete gradient methods of arbitrary order was presented in \cite{eidnes2022order}. Replacing $S$ in \eqref{eq:general_dgm} with an approximation $\overline S(x,\hx,h)$ allows for the construction of higher-order schemes, even though discrete gradients can at most be second-order approximations of the gradient \cite{mclachlan1999geometric}. The approximation $\overline S(x,\hx, h)$ is required to be skew-symmetric and to satisfy $\overline S(x,x,0) = S(x)$. 

The construction of $\overline S(x,\hx,h)$ requires potentially higher-order derivatives of the first integral $H(x)$ and of the discrete gradient $\overline \nabla H(x,\hx)$, which might be expensive or infeasible to compute. Automatic differentiation is an increasingly common approach to obtain derivatives of functions, especially in machine learning settings where functions may be outputs of neural networks, but this requires the function to be implemented in a programming language that supports automatic differentiation and could be computationally expensive.

In this paper, we propose a fourth-order derivative-free integral-preserving method. This uses the symmetrized Itoh--Abe discrete gradient and an $\overline S(x,\hx, h)$ that relies on finite difference approximations of derivatives of the discrete gradient $\overline \nabla H(x,\hx)$ and the Hessian of $H(x)$. We will first present a general fourth-order discrete gradient method, derived using the order theory of \cite{eidnes2022order}. Then we specify the scheme by choosing to use the symmetrized Itoh--Abe discrete gradient and present appropriate finite difference approximations that ensure that the order is kept while the scheme becomes derivative-free.

\begin{theorem}[Fourth-order DGM]\label{def:general_4th_dgm}
    Consider the Hamiltonian system \eqref{eq:hamiltonian_ODE}. Let $\overline \nabla H$ be a second-order symmetric discrete gradient of $H$,
    and $S_4(x,\hx,h)$ the skew-symmetric matrix
    \begin{equation}\label{eq:s4}
    \begin{aligned}
        S_4(x,\hx,h) = S &+ \frac{h}{2} S \bigg [ Q(x,\frac{x+2\hx}{3})-Q(\hx,\frac{2x + \hx}{3})\bigg ] S\\ &- \frac{h^2}{12} \, S\nabla^2 H(\bar{x})S \nabla^2 H(\bar{x})S,
    \end{aligned}
\end{equation}
    with $\bar x := \frac{x+\hx}{2}$ and
    \begin{equation}
        Q(x,\hx) := \frac{1}{2} (D_2 \overline{\nabla}H (x,\hx)^T - D_2 \overline{\nabla}H(x,\hx)),
    \end{equation}
where $D_2 \overline{\nabla}H(x,\hx)$ is the derivative of the discrete gradient with respect to\ the second argument. Then the discrete gradient method
    \begin{equation}\label{eq:dgm4}
        \hx = x + h S_4(x,\hx,h) \overline{\nabla} H(x,\hx)
    \end{equation}
is of fourth order.
\end{theorem}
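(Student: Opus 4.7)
My approach is to apply the order conditions for discrete gradient methods from \cite{eidnes2022order}: these express the requirement that an integrator of the form $\hat x = x + h \overline S(x, \hat x, h)\, \overline\nabla H(x, \hat x)$ agree with the exact flow of \eqref{eq:hamiltonian_ODE} through order $h^p$ as explicit conditions on the Taylor coefficients of $\overline S$ in $h$. The task is then to verify that the particular $S_4$ in \eqref{eq:s4} realizes these conditions for $p = 4$.

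First I would reduce the problem using time-symmetry. Since $\overline\nabla H(\hat x, x) = \overline\nabla H(x, \hat x)$ by hypothesis, it remains to check $S_4(\hat x, x, -h) = S_4(x, \hat x, h)$. The midpoint $\bar x$ is symmetric in the two arguments, so the Hessian correction term (which is also even in $h$) is invariant. The $Q$-bracket $Q(x, (x + 2\hat x)/3) - Q(\hat x, (2x + \hat x)/3)$ is antisymmetric under $x \leftrightarrow \hat x$, and the prefactor $h/2$ compensates via $h \to -h$, so that term is invariant as well. Since symmetric one-step methods have even order, it then suffices to establish order at least $3$.

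Second, I would Taylor-expand both sides of \eqref{eq:dgm4} about $h = 0$ and match coefficients against the exact-flow expansion $\phi_h(x) = x + h f + \tfrac{h^2}{2} f'f + \tfrac{h^3}{6}(f''(f,f) + f'f'f) + O(h^4)$ with $f = S \nabla H$. Three ingredients enter: (i) $\overline\nabla H(x, \hat x) = \nabla H(\bar x) + c\, \nabla^3 H(\bar x)(\hat x - x, \hat x - x) + O(\|\hat x - x\|^4)$, which follows from the discrete gradient axioms combined with symmetry (no odd powers of $\hat x - x$ survive because $\overline\nabla H$ is invariant under $x \leftrightarrow \hat x$); (ii) Taylor expansion of the two $Q$-terms about the diagonal, using that $Q(y, y) = 0$ because $D_2 \overline\nabla H(y, y)$ is symmetric, so each is $O(\hat x - x)$ with leading contribution captured by the derivatives of $Q$ at $(y, y)$, objects involving $\nabla^3 H$; (iii) direct evaluation of the Hessian term at $\bar x$. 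Substituting these and using the ansatz $\hat x - x = hf + h^3 v_3 + O(h^5)$ (the even-in-$h$ coefficients vanish by symmetry), one obtains the order-$3$ consistency conditions.

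The main obstacle is the bookkeeping at order $h^3$, where the elementary differentials $f''(f, f)$ and $f' f' f$ must each appear with coefficient $1/6$. The Hessian correction in $S_4$ is tailored to contribute to the latter; the $Q$-bracket, together with the intrinsic $\nabla^3 H$ contribution coming from $\overline\nabla H$ itself, must combine to produce the former. The specific weights $1/3$ and $2/3$ in the convex combinations $(x + 2\hat x)/3$ and $(2x + \hat x)/3$, and the factor $1/12$ on the Hessian term, are precisely the values for which these sums yield the required $1/6$ once all contributions (including those from the implicit dependence of $\bar x$ on higher-order parts of $\hat x - x$) are tallied. With this identity in hand, time-symmetry upgrades order $3$ to order $4$, completing the argument.
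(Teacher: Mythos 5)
Your reduction to order three is correct and is in fact a slight streamlining of the paper's route: you verify directly that $S_4(\hx,x,-h)=S_4(x,\hx,h)$, so that with a symmetric discrete gradient the scheme equals its own adjoint and, a symmetric method having even order, it suffices to show order at least three; the paper reaches the same point by constructing $S_4$ as the average of an asymmetric third-order $S_3$ with its adjoint. Where you diverge is in how order three is established: the paper checks the tree-indexed order conditions of \cite{eidnes2022order} (their Table 5 and Eq.~70), reading off the coefficients $\tfrac{1}{2}$, $\tfrac{1}{3}$, $-\tfrac{1}{24}$ and noting that the conditions involving $Q(x,x)$ are vacuous for symmetric discrete gradients, whereas you propose a from-scratch Taylor expansion.

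The gap is in your ingredient (i). Symmetry of $\overline\nabla H$ in $(x,\hx)$ does kill the odd powers of $\delta:=\hx-x$, but the quadratic term is a general, DG-dependent tensor $C(\bar x)(\delta,\delta)$, \emph{not} $c\,\nabla^3H(\bar x)(\delta,\delta)$: the identity $\overline\nabla H(x,\hx)^T\delta=H(\hx)-H(x)$ constrains only the fully contracted quantity $C(\bar x)(\delta,\delta)^T\delta$, forcing agreement with $\tfrac{1}{24}\nabla^3H(\bar x)(\delta,\delta,\delta)$ after contraction but not before. For the Gonzalez midpoint DG the quadratic term is $\frac{\nabla^3H(\bar x)(\delta,\delta,\delta)}{24\,\|\delta\|^2}\,\delta$, and for the symmetrized Itoh--Abe DG --- the one this paper actually uses --- it is likewise not of your assumed form. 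This is not cosmetic: under assumption (i), $D_2\overline\nabla H(x,\hx)$ would be symmetric up to $O(\|\delta\|^3)$, hence $Q(x,\hx)=O(\|\delta\|^3)$ near the diagonal, the $Q$-bracket in \eqref{eq:s4} would contribute only at $O(h^5)$ to the update, and your verification would silently reduce to the AVF/Quispel--McLaren case where $Q\equiv0$. The entire purpose of the $Q$-bracket, with its weights $\tfrac{2}{3}$ and $\tfrac{1}{3}$, is to cancel the DG-specific part of $C$ (whose antisymmetrized derivative is precisely the leading behaviour of $Q$ near the diagonal), and an expansion built on (i) cannot see this cancellation, so it does not prove the theorem for a general second-order symmetric discrete gradient. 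This bookkeeping is exactly what the G-series/order-condition machinery of \cite{eidnes2022order} packages, which is why the paper invokes it rather than redoing the expansion. (A smaller slip: the ansatz $\hx-x=hf+h^3v_3+O(h^5)$ is also incorrect --- symmetry does not make $\hx-x$ odd in $h$; the $h^2$ coefficient is $\tfrac{1}{2}f'f$.)
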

\begin{proof}
    Consider
    \begin{equation}\label{eq:dgm3}
        \hx = x + h S_3(x,\hx,h) \overline{\nabla} H(x,\hx)
    \end{equation}
    with
    \begin{equation}\label{eq:s3}
    \begin{aligned}
        S_3(x,\hx,h) = S &+ h S Q(x,\frac{x+2\hx}{3})S - \frac{h^2}{12} \, S\nabla^2 H(\bar{x})S \nabla^2 H(\bar{x})S.
    \end{aligned}
    \end{equation}
    We have that \eqref{eq:dgm3} is of third order if \eqref{eq:s3} satisfies the necessary order conditions given in \cite[Table 5]{eidnes2022order}.
    To that end, we note that \eqref{eq:s3} can be written as 
    \begin{align*}
        S_3(x,\hx,h) = & \, \frac{1}{2}(1+1) S + h \frac{1}{2} \Big(S Q(x,\frac{x+2\hx}{3})S + S Q(x,\frac{x+2\hx}{3})S\Big)\\
        & \, + h^2 (-\frac{1}{24}) \Big(S\nabla^2 H(\bar{x})S \nabla^2 H(\bar{x})S + S\nabla^2 H(\bar{x})S \nabla^2 H(\bar{x})S\Big),
    \end{align*}
    which is of the form \cite[Eq. 70]{eidnes2022order}, with
    \begin{align*}
        \sum_j b_{\ab j} &= b_{\ab 1} =  \frac{1}{2},\\
        \sum_j b_{\tabb j} &= b_{\tabb 1} = \frac{1}{2},\\
        \sum_{j} b_{\tabb j} \phi_{\tabb j1}(\ab) &= b_{\tabb 1} \phi_{\tabb 11}(\ab) = \frac{1}{3},\\
        \sum_j b_{\aaabbb j} &= b_{\aaabbb 1} = -\frac{1}{24},
    \end{align*}
    where we use that
    \begin{equation*}
        \frac{x + 2\hx}{3} = x + \frac{2}{3} h S \nabla H(x) +   \mathcal{O}(h^2)
    \end{equation*}
    is a G-series, as defined in \cite{eidnes2022order}, with $\phi(\ab) = \frac{2}{3}$.
    Furthermore, we can ignore the order conditions for $\ttabbb$ and $\atabbb - \taabbb$ since, as noted in \cite{eidnes2022order}, these correspond to terms involving $Q(x,x)$, which is zero for any symmetric discrete gradient. Then the necessary conditions for \eqref{eq:dgm3} being of at least order 3 are satisfied.

    By \cite[Theorem II.3.2]{Hairer06}, the adjoint method of \eqref{eq:dgm3}, i.e.
    \begin{align*}
        \hx &= x + h S_3(\hx,x,-h) \overline{\nabla} H(x, \hx),
    \end{align*}
    is also of third order. Any affine combination of $S$'s that gives a discrete gradient method of order $p$ is another $S$ that gives a discrete gradient method of at least order $p$. Thus
    \begin{align*}
    S_4(x,\hx,h) = \frac{1}{2}(S_3(x,\hx,h) + S_3(\hx,x,-h))
    \end{align*}
    gives a scheme of at least order $3$. Since it gives a symmetric method, we have by \cite[Theorem II.3.2]{Hairer06} that this is of order 4.
\end{proof}

\begin{remark}
    It should be noted that discrete gradient methods can also be applied to dissipative systems, in which case the skew-symmetric matrix $S$ in \eqref{eq:hamiltonian_ODE} is replaced by a negative semi-definite matrix, and preserve the dissipation property \cite{mclachlan1999geometric}. However, in this case the higher-order schemes derived using the theory of \cite{eidnes2022order} will not lead to dissipative discrete gradient methods in general. For instance, $S_4$ given by \eqref{eq:s4} is not guaranteed to be negative semi-definite if $S$ is.
\end{remark}

If the discrete gradient is the AVF discrete gradient, then $Q(x,\hx) = 0$ for any $x, \hx$ \cite{eidnes2022order}, and \eqref{eq:dgm4} becomes the fourth-order method of Quispel and McLaren \cite{Quispel08new}.
We aim at constructing a derivative-free, fourth-order DGM. This could for instance be achieved by starting with the first-order Itoh--Abe DG, which is defined below:

\definition[Itoh--Abe discrete gradient]\label{def:itoh_abe_method}
We use the following short-hand notation for the various vector concatenations or shifts along dimension $m$:
\begin{equation}    
\begin{aligned}
    \hat X_m &:= [\hx_1,\hx_2,\dots,\hx_m,x_{m+1},\dots,x_n]^T,\\
    X_m &:= [x_1,x_2,\dots,x_m,\hx_{m+1},\dots,\hx_n]^T.
    \label{eq:concat_m}
\end{aligned}
\end{equation}
Then the Itoh--Abe discrete gradient \cite{itoh1988hamiltonian} is defined for a Hamiltonian $H : \R^n \rightarrow \R$, by 
\begin{equation}
    \overline \nabla_{\text{IA}} H(x, \hx) := \begin{bmatrix}
        \frac{H(\hx_1,x_2,x_3,\dots,x_n) - H(x_1,x_2,x_3,\dots,x_n)}{\hx_1 - x_1}\\ 
        \frac{H(\hx_1,\hx_2,x_3,\dots,x_n) - H(\hx_1,x_2,x_3,\dots,x_n)}{\hx_2 - x_2}\\
        \vdots\\
        \frac{H(\hx_1,\hx_2,\dots,\hx_{n-1},\hx_n) - H(\hx_1,\hx_2,\dots,\hx_{n-1},x_n)}{\hx_n - x_n}\\
    \end{bmatrix} = 
    \begin{bmatrix}
        \frac{H(\hX_1) - H(\hX_0)}{h_1}\\ 
        \frac{H(\hX_2) - H(\hX_1)}{h_2}\\
        \vdots\\
        \frac{H(\hX_n) - H(\hX_{n-1})}{h_n}\\
    \end{bmatrix},
    \label{eq:itoh_abe_def}
\end{equation}
assuming $x_i\neq \hx_i$ and letting $h_i = \hx_i - x_i$. In the event that $x_i = \hx_i$ we define the $i$-th element of the discrete gradient to be given by $(\overline \nabla_{\text{IA}} H(x, \hx))_i := \frac{\partial}{\partial x_i} H(\hat{X}_i)$.

This is a first-order approximation of the gradient. However, by computing the average of the Itoh--Abe discrete gradient and its adjoint (switching $x$ and $\hx$), we obtain a symmetric discrete gradient of second order, due to \cite[Th. II.3.2]{Hairer06}. We call this the symmetrized Itoh--Abe (SIA) discrete gradient. It is defined by

\begin{equation}
    \overline \nabla_{\text{SIA}} H(x, \hx) := \frac{1}{2}\bigg (  \overline \nabla_{\text{IA}} H(x, \hx) + \overline \nabla_{\text{IA}} H(\hx, x) \bigg ) \!=\!
    \begin{bmatrix}
        \frac{H(\hX_1) - H(\hX_0) - H(X_0) + H(X_1) }{h_1}\\ 
        \frac{H(\hX_2) - H(\hX_1) - H(X_1) + H(X_2) }{h_2}\\
        \vdots\\
        \frac{H(\hX_n) - H(\hX_{n-1}) - H(X_{n-1}) + H(X_n) }{h_n}
    \end{bmatrix}.
    \label{eq:sia_method}
\end{equation}

These discrete gradients are derivative-free, but only of first and second order; they will thus give first- and second-order integrators if used in the scheme \eqref{eq:general_dgm} with $\overline S(x,\hx,h) =S$. We will now show how to construct a fourth-order derivative-free discrete gradient method, by combining the second-order Itoh--Abe discrete gradient with the fourth-order discrete gradient method in Theorem \ref{def:general_4th_dgm}. Since the fourth-order DGM requires the Hessian of $H$, $\nabla^2H$, and the derivative of the discrete gradient with respect to the second argument, $D_2\overline \nabla H(x,\hx)$, these quantities must be approximated in a derivative-free manner.

\section{Finite difference approximations}

\subsection{Hessian approximation}

Let $e_i$ denote the $i$-th vector in the canonical basis, so that e.g.\ $e_2 = [0,1,0,\dots,0]^T$, and let $\mathbbm{1} := [1,\dots,1]\in\R^n$. We define the following difference operators:
\begin{align*}
    [\Delta_1(H)(x)]_i &:= H(x + \tau e_i) +  H(x - \tau e_i), \\
    [\Delta_2(H)(x)]_{ij} &:= H\big(x + \tau(e_i + e_j)\big) + H\big(x - \tau(e_i + e_j)\big).
\end{align*}
By computing the average of the forward and backward difference approximations, we obtain the following second-order approximation of the Hessian:
\begin{align}
    \nabla^2 H(x) &= \frac{1}{2\tau^2} \bigg(2 \cdot \mathbbm{1}\mathbbm{1}^TH(x) + \Delta_{2}(H)(x) - \Delta_1(H)(x)\mathbbm{1}^T - \mathbbm{1}\Delta_1(H)(x)^T \bigg ) + \mathcal O(\tau^2) \nonumber \\
    &=\nabla^2_{\tau}H(x) + \mathcal O(\tau^2),
\label{hess_fdm}
\end{align}
where $\nabla^2_{\tau}H(x)$ denotes the Hessian approximation. The order is easily verified by Taylor expansion, and the cost of this approximation in terms of evaluations of $H(x)$ is $n^2 + 3n + 1$. If $n>2$, this is less than directly applying the midpoint finite difference rule, which would require $2n^2 + 1$ function evaluations.

\subsection{Derivative of the SIA discrete gradient}\label{ch:derivative_of_SIA} The symmetrized Itoh--Abe discrete gradient is given element-wise by 
\begin{equation}
[ \overline \nabla_{\text{SIA}} H(x, \hx) ]_i = \frac{1}{h_i}\bigg (H(\hat X_i) - H(X_i) - H(\hat X_{i+1}) + H(X_{i+1})    \bigg).
\end{equation}
By considering which variables depend on $\hx_j$ following the definitions of $X_m$ and $\hX_m$ in \eqref{eq:concat_m} we obtain the following expression for the derivative of the SIA discrete gradient:
\begin{equation}
    \begin{aligned}
        \frac{\partial}{\partial \hx_j}[    \overline \nabla_{\text{SIA}} H(x, \hx) ]_i &= 
        \begin{cases} \frac{1}{h_i} \frac{\partial}{\partial \hx_j}\bigg (H(\hat X_{i+1}) - H(\hat X_i)    \bigg) \quad &\text{if } i>j,\\
    \frac{1}{h_i} \frac{\partial}{\partial \hx_j}\bigg ( H( X_i) - H( X_{i+1})    \bigg) \quad &\text{if } i<j,\\
    \frac{1}{h_i} \frac{\partial}{\partial \hx_i}\bigg ( H( X_i) + H(\hat X_{i+1})    \bigg) - \frac{1}{h_i}[    \overline \nabla_{\text{SIA}} H(x, \hx) ]_i
        \quad &\text{if } i=j.
        \end{cases}
    \end{aligned}
    \end{equation}
where the equation when $i=j$ follows from the product rule. Approximating derivatives of $H(x)$ with the midpoint rule we obtain
\begin{equation}
    \frac{1}{h_i} \frac{\partial}{\partial \hx_j}  H( X_i) = \frac{1}{2h_i\tau}\bigg( H( X_i + e_j\tau) - H( X_i - e_j\tau) \bigg) + \mathcal O(\frac{\tau^2}{h}),
\end{equation}
and similar for $X_{i+1}$, $\hX_i$ and $\hX_{i+1}$. Let us denote the total midpoint approximation of the SIA discrete gradient by $D_2^{\tau} \overline \nabla_{\text{SIA}} H(x, \hx)$ such that 
\begin{equation}
    D_2 \overline \nabla_{\text{SIA}} H(x, \hx) =  D_2^{\tau} \overline \nabla_{\text{SIA}} H(x, \hx) + \mathcal O(\frac{\tau^2}{h}).
    \label{midpoint_sia_fdm}
\end{equation}

Computing $D_2^{\tau} \overline \nabla_{\text{SIA}}$ requires $4(n^2 + n)$ evaluations of $H(x)$ which is reduced to $4(n^2 - n)$ when the diagonal is not needed, which is the case when computing $Q_{\tau}(x,\hx)$ due to its skew-symmetry. The derivative approximation of the first order Itoh--Abe method $D_2^{\tau} \overline \nabla_{\text{IA}}$ requires $2(n^2 + n)$ evaluations.
 
\subsection{Finite precision in difference schemes}\label{ch:round_off}
Due to limited machine precision, error from numerical methods or noise, it is often the case that a function $f : \R \rightarrow \R$ only can be evaluated to a finite precision. Let us denote this function by $f_{\epsilon}(x) := f(x) + \epsilon(x)$ for some bounded error  $\epsilon(x) \leq \overline \epsilon$. Computing a finite difference approximation needs to take the finite precision into account when choosing the discretization step size $\tau$, as discussed for instance in \cite{kelleyIterativeMethodsLinear1995a,shiNumericalPerformanceDerivativeFree2021}. For the midpoint difference scheme, we have that

\begin{align}
    \frac{\partial}{\partial x} f_{\epsilon}(x) &= \frac{1}{2\tau}\bigg( f(x + \tau) - f(x - \tau) + \epsilon(x + \tau) - \epsilon(x - \tau) \bigg) + \mathcal O(\tau^2) \nonumber \\
    &= \frac{1}{2\tau}\bigg( f(x + \tau) - f(x - \tau)  \bigg) + \mathcal O\bigg(\tau^2 + \frac{\overline \epsilon}{\tau} \bigg ).
    \label{first_finite_pres}
\end{align}
For $\tau > 0$, the leading order term $\tau^2 + \frac{\overline \epsilon}{\tau}$ is convex and thus minimized by $\tau = \overline \epsilon^{\frac{1}{3}}$. For the Hessian approximation, we use a second-order approximation for the second derivative and find that
\begin{equation}
\begin{aligned}
    \frac{\partial^2}{\partial x^2} f_{\epsilon}(x) =& \frac{1}{2\tau^2}\bigg( f(x) -  2f(x + \tau) - 2f(x - \tau) +  f(x + 2\tau) +  f(x - 2\tau) \bigg) \\ &+  \mathcal O\bigg(\tau^2 + \frac{\overline \epsilon}{\tau^2} \bigg ).
    \label{second_finite_pres}
\end{aligned}
\end{equation}
By a similar argument as above, the leading order term is minimized by $\tau = \overline \epsilon^{\frac{1}{4}}$.

\subsection{Derivative-free fourth-order DGM}

\begin{lemma}\label{lem:err_in_S}
    Let $S_4^{\tau}$ be given by the same form as $S_4$ in Definition \ref{def:general_4th_dgm} with the following substitutions:
    \begin{align*}
        D_2 \overline \nabla_{\text{SIA}} H(x, \hx) &\rightarrow D_2^{\tau_1} \overline \nabla_{\text{SIA}} H(x, \hx) \quad \text{in $Q(x,\hx)$},\\
        \nabla^2 H(\bar x) &\rightarrow  \nabla^2_{\tau_2}H(\bar x).
    \end{align*}
    Here $\tau_1$ and $\tau_2$ are the finite difference step sizes for the derivative of the SIA DG approximated by \eqref{midpoint_sia_fdm} and the Hessian approximated by \eqref{hess_fdm}, respectively.  Assume that the Hamiltonian can only be evaluated to a finite precision $H(x) + \epsilon(x)$, where the error $\epsilon(x)$ is bounded by $\overline \epsilon$. Then 
    \begin{equation}
        S_4^{\tau}(x,\hx,h) = S_4(x,\hx,h) +\mathcal O\bigg (\tau_1^2 + \frac{ \overline \epsilon}{\tau_1} + h^2(\tau_2^2 + \frac{ \overline \epsilon }{\tau_2^2}  )  \bigg ).
    \end{equation}
\end{lemma}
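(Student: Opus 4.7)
My approach is a linear decomposition of $S_4^\tau - S_4$ followed by a term-by-term estimate using the three finite-difference bounds already established in the preceding subsections. First I would write the difference as the sum of a $Q$-perturbation contribution and a Hessian-perturbation contribution, exploiting that the leading $S$ in \eqref{eq:s4} is unchanged. By the triangle inequality, and assuming $\|S\|$, $\|\nabla^2 H\|$ and the norm of $D_2\overline{\nabla}_{\text{SIA}}H$ are bounded on a neighbourhood of the current iterate, it suffices to bound the two contributions separately.

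For the $Q$-perturbation I would invoke \eqref{midpoint_sia_fdm} applied to the inexact Hamiltonian $H_\epsilon$: every element of $D_2\overline{\nabla}_{\text{SIA}}H$ is, up to division by $h_i$, a midpoint derivative, so the standard estimate \eqref{first_finite_pres} yields the entrywise bound $D_2^{\tau_1}\overline{\nabla}_{\text{SIA}}H = D_2\overline{\nabla}_{\text{SIA}}H + \mathcal{O}(\tau_1^2/h + \overline{\epsilon}/(h\tau_1))$. Since $Q = \tfrac12(D_2^T - D_2)$ is linear in $D_2\overline{\nabla}H$, the same order bound propagates to $Q^{\tau_1}-Q$. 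Sandwiching between two copies of $S$ and multiplying by the prefactor $h/2$ from \eqref{eq:s4} cancels the $1/h$ factor and produces exactly the first two claimed error terms $\mathcal{O}(\tau_1^2 + \overline{\epsilon}/\tau_1)$.

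For the Hessian perturbation I would combine \eqref{hess_fdm} with the finite-precision bound \eqref{second_finite_pres} for the second-derivative midpoint rule to obtain $\nabla^2_{\tau_2}H(\bar x) = \nabla^2 H(\bar x) + E$ with $\|E\| = \mathcal{O}(\tau_2^2 + \overline{\epsilon}/\tau_2^2)$. Expanding the triple product $S\nabla^2_{\tau_2}H(\bar x)S\nabla^2_{\tau_2}H(\bar x)S = S(\nabla^2H+E)S(\nabla^2H+E)S$ and discarding the quadratic-in-$E$ correction leaves a residual of order $\|E\|$; the prefactor $-h^2/12$ then delivers the third claimed term $\mathcal{O}(h^2(\tau_2^2 + \overline{\epsilon}/\tau_2^2))$. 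Summing the two contributions via the triangle inequality yields the stated estimate.

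The main bookkeeping challenge is tracking the $1/h$ factors that arise from dividing by $h_i = \hx_i - x_i$ inside the SIA discrete gradient and its derivative, since these factors interact with both the midpoint step size $\tau_1$ and the outer prefactor $h/2$ of the $Q$-term; once that cancellation is executed correctly, the rest of the argument is a routine combination of the three finite-difference estimates together with boundedness of $S$ and $\nabla^2 H$.
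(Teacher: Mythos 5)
Your proposal is correct and follows essentially the same route as the paper's proof: both split $S_4^\tau - S_4$ into the $Q$-perturbation and the Hessian-perturbation, bound $Q_{\tau_1}-Q$ by $\mathcal O\big(\tfrac{1}{h}(\tau_1^2+\overline\epsilon/\tau_1)\big)$ via \eqref{midpoint_sia_fdm} and \eqref{first_finite_pres} so that the $h/2$ prefactor cancels the $1/h$, and bound the Hessian triple product via \eqref{hess_fdm} and \eqref{second_finite_pres} before applying the $h^2/12$ prefactor. The only difference is that you spell out the boundedness assumptions and the discarding of the quadratic-in-$E$ term, which the paper leaves implicit.
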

\begin{proof}
    Let 
    \begin{align*}
        Q_{\tau_1}(x,\hx) := \frac{1}{2} (D_2^{\tau_1} \overline{\nabla}H (x,\hx)^T - D_2^{\tau_1} \overline{\nabla}H(x,\hx)).
    \end{align*}
    Then
    \begin{align*}
        Q_{\tau_1}(x,\hx) = Q(x,\hx) + \mathcal O\bigg(\frac{1}{h}(\tau_1^2 + \frac{ \overline \epsilon}{\tau_1})\bigg),
    \end{align*}
    due to the finite difference error in \eqref{midpoint_sia_fdm} and the finite precision error in \eqref{first_finite_pres}. Similarly, we find  
    \begin{align*}
S\nabla^2_{\tau_2} H(\bar{x})S \nabla^2_{\tau_2} H(\bar{x})S &= S\nabla^2 H(\bar{x})S \nabla^2 H(\bar{x})S + \mathcal O\bigg( \tau_2^2 + \frac{ \overline \epsilon}{\tau_2^2} \bigg),
\end{align*}
    due to \eqref{hess_fdm} and \eqref{second_finite_pres}.  In total, we arrive at
\begin{align*}
    S_4^{\tau}(x,\hx,h) = S&+ \frac{h}{2} S \bigg [ Q_{\tau_1}(x,\frac{x+2\hx}{3})-Q_{\tau_1}(\hx,\frac{2x + \hx}{3})\bigg ] S \\ &- \frac{h^2}{12} \, S\nabla^2_{\tau_2} H(\bar{x})S \nabla^2_{\tau_2} H(\bar{x})S\\
    = S&_4(x,\hx,h) + \mathcal O\bigg (\tau_1^2 + \frac{ \overline \epsilon}{\tau_1} + h^2(\tau_2^2 + \frac{ \overline \epsilon }{\tau_2^2}  )  \bigg ).
\end{align*}
\end{proof}

\begin{remark}
    Selecting $\tau_1$ and $\tau_2$ to minimize the leading order terms, following the discussion above, we find that
    \begin{equation}
    S_4^{\tau}(x,\hx,h) = S_4(x,\hx,h) +\mathcal O\bigg (\overline \epsilon^{\frac{2}{3}} + h^2\overline \epsilon^{\frac{1}{2}} \bigg ).
    \label{eq:S_df_error}
    \end{equation}
\end{remark}
Since $\overline \nabla_{\text{SIA}}H$ is a symmetric discrete gradient and does not require derivatives, the combination with the gradient-free skew-symmetric $S_4^{\tau}$ gives a fourth-order derivative-free method by
\begin{equation}\label{df_dgm_4}
    x_{n+1} = x_{n} + hS_4^{\tau}(x_n,x_{n+1},h)\overline \nabla_{\text{SIA}}H(x_n,x_{n+1}).
\end{equation}
This yields
\begin{equation*}
\begin{aligned}
    x_{n+1} &= x_{n} + hS_4(x_n,x_{n+1},h)\overline \nabla_{\text{SIA}}H(x_n,x_{n+1})  + \mathcal O\bigg (h\overline \epsilon^{\frac{2}{3}} + h^3\overline \epsilon^{\frac{1}{2}} \bigg ) \\
    &= x(t_n + h) +  \mathcal O\bigg (h\overline \epsilon^{\frac{2}{3}} + h^3\overline \epsilon^{\frac{1}{2}} + h^4 \bigg ),
\end{aligned}
\end{equation*}
assuming that $\tau_1$ and $\tau_2$ are chosen optimally regarding error from the finite precision evaluation of $H(x)$. Since $x_{n+1}$ is defined implicitly, Newton's method will be utilized to enable the solution of initial value problems.

\section{Convergence of Newton's method}

Newton's method could be utilized to find the root $x^*\in\R^n$ such that $F(x^*) = 0$ of a differentiable function $F: \R^n \rightarrow \R^n$. Let $F'(x):= \frac{\partial}{\partial x} F(x)$ be the Jacobian, which we assume is invertible for all $x \in D \subset \R^n$. The iterations of Newton's method are then given by 
\begin{equation}
    x^{(i+1)} = x^{(i)} + F'(x^{(i)})^{-1}F(x^{(i)}).
\end{equation}
As proved in Theorem 5.1.1 in \cite{kelleyIterativeMethodsLinear1995a}, assuming that $F(x) = 0$ has a solution $x^*$ and that $F'(x)$ is Lipschitz continuous for all $x\in D$, the iterations exhibit quadratic convergence; i.e., for some constant $\tilde C>0$ we have that
\begin{equation*}
    \|x^{(i+1)} - x^*\| \leq \tilde C \|x^{(i)} - x^*\|^2.
\end{equation*}

Consider first using the derivative-dependent scheme \eqref{eq:dgm4} with the SIA DG to solve initial value problems on Hamiltonian form. That is, assuming $x := x(t_n)$ we aim to find $\hx \approx x(t_{n+1})$ such that $F(\hx) = 0$, where 
\begin{equation}
\begin{aligned}
    F(\hx) :=& \; \hx - x - h S_4(x,\hx,h) \overline \nabla_{\text{SIA}} H(x,\hx),\\
    \text{and} \quad F'(\hx) =& \; I - h \frac{\partial}{\partial \hx} S_4(x,\hx,h) \overline \nabla_{\text{SIA}} H(x,\hx) - h S_4(x,\hx,h)  \frac{\partial}{\partial \hx} \overline \nabla_{\text{SIA}} H(x,\hx).
    \label{eq:exact_sia_4_newton}
\end{aligned}
\end{equation}
Considering then the gradient-free method of \eqref{df_dgm_4}, we replace $S_4$ by $S_4^\tau$. Also ignoring the second term of $F'$, we arrive at approximations of $F$ and $F'$ given by
\begin{equation}
\begin{aligned}
    F_{\tau}(\hx) :=& \; \hx - x - h S^{\tau}_4(x,\hx,h) \overline \nabla_{\text{SIA}} H(x,\hx),\\
    \quad F_{\tau}'(\hx) :=& \; I - h S^{\tau}_4(x,\hx,h)  D_2^{\tau}\overline \nabla_{\text{SIA}} H(x,\hx),
\end{aligned}
\label{eq:df_dgm_4_newton}
\end{equation}
where $S^{\tau}_4(x,\hx,h)$ is defined in Lemma \ref{lem:err_in_S} and $ D_2^{\tau}\overline \nabla_{\text{SIA}} H(x,\hx)$ is the approximation found in Equation \eqref{midpoint_sia_fdm}. This begs the question of how the error introduced by $F_{\tau}$ and $F'_{\tau}$ affects the convergence of Newton's method, compared to the solution obtained by \eqref{eq:exact_sia_4_newton}. Theorem 5.4.1 in \cite{kelleyIterativeMethodsLinear1995a} presents a general convergence proof for Newton iterations under inexact evaluation of $F$ and $F'$. The following theorem considers the special case in which the inexact iterations are given by the approximations in \eqref{eq:df_dgm_4_newton}.
\begin{theorem}\label{thm:df_dgm_4_newton}
Let $x^*$ satisfy $F(x^*) = 0$, where $F$ is defined as in \eqref{eq:exact_sia_4_newton}. Consider the inexact Newton iterations
    \begin{equation}
    x^{(i+1)} = x^{(i)} + F_{\tau}'(x^{(i)})^{-1}F_{\tau}(x^{(i)}),
    \label{eq:newton_iter}
\end{equation}
using $F_{\tau}$ and $F_{\tau}'$ given by \eqref{eq:df_dgm_4_newton}. Assume that the Hamiltonian can be evaluated to finite precision $H(x) + \epsilon(x)$, with $\epsilon(x) \leq \overline \epsilon$. Let the finite difference step size for the derivative $D_2^{\tau_1}$ be given by  $\tau_1 = \overline \epsilon^{\frac{1}{3}}$ and for the Hessian $\nabla^2_{\tau_2}$ be $\tau_2 = \overline \epsilon^{\frac{1}{4}}$.  Then
\begin{equation}
    \|x^{(i+1)} - x^*\| \leq C \bigg ( \|x^{(i)} - x^*\|^2 + \big ( \overline \epsilon^{\frac{2}{3}}+ h^2 +  h^3\overline \epsilon^{\frac{1}{2}} \big )\|x^{(i)} - x^*\| +   h\overline \epsilon^{\frac{2}{3}} + h^3\overline \epsilon^{\frac{1}{2}}  \bigg ).
\end{equation}
\end{theorem}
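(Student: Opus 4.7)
The plan is to reduce the statement to an application of Kelley's general convergence result for inexact Newton iterations (Theorem 5.4.1 in \cite{kelleyIterativeMethodsLinear1995a}). That theorem bounds the error of the next inexact iterate essentially as
\[
\|x^{(i+1)} - x^*\| \leq C\bigl(\|x^{(i)}-x^*\|^2 + \|F'_{\tau}-F'\|\,\|x^{(i)}-x^*\| + \|F_{\tau}-F\|\bigr),
\]
provided $F'$ is Lipschitz and invertible near $x^*$ and the perturbations are small enough. So the task reduces to producing sharp $\mathcal{O}$-estimates of $\|F_{\tau}(\hat x)-F(\hat x)\|$ and $\|F'_{\tau}(\hat x)-F'(\hat x)\|$ and then reading off the stated bound.

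For the first estimate I would write
\[
F_{\tau}(\hat x) - F(\hat x) = -h\bigl(S_4^{\tau}(x,\hat x,h) - S_4(x,\hat x,h)\bigr)\overline{\nabla}_{\text{SIA}}H(x,\hat x),
\]
apply Lemma \ref{lem:err_in_S} together with the optimal choices $\tau_1 = \overline{\epsilon}^{1/3}$ and $\tau_2 = \overline{\epsilon}^{1/4}$ given in \eqref{eq:S_df_error}, and use that $\overline{\nabla}_{\text{SIA}}H$ is bounded on a neighbourhood of $x^*$. This yields $\|F_{\tau} - F\| = \mathcal{O}\bigl(h\overline{\epsilon}^{2/3} + h^3\overline{\epsilon}^{1/2}\bigr)$, which is exactly the constant term in the theorem.

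For the Jacobian estimate I would split the difference $F'_{\tau}-F'$ into three contributions matching the two simplifications made in passing from \eqref{eq:exact_sia_4_newton} to \eqref{eq:df_dgm_4_newton}. First, the omitted term $h\,\partial_{\hat x} S_4(x,\hat x,h)\,\overline{\nabla}_{\text{SIA}}H$: since $S_4 = S + h(\cdots) + h^2(\cdots)$, its derivative with respect to $\hat x$ is $\mathcal{O}(h)$, so this contributes $\mathcal{O}(h^2)$. Second, replacing $S_4$ by $S_4^{\tau}$ in the retained third term contributes $h\,\|S_4^{\tau}-S_4\|\,\|D_2\overline{\nabla}_{\text{SIA}}H\|$, which by Lemma \ref{lem:err_in_S} is $\mathcal{O}(h\overline{\epsilon}^{2/3} + h^3\overline{\epsilon}^{1/2})$. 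Third, replacing $D_2\overline{\nabla}_{\text{SIA}}H$ by $D_2^{\tau_1}\overline{\nabla}_{\text{SIA}}H$ gives, via \eqref{midpoint_sia_fdm} and the optimal $\tau_1$, an error of order $\overline{\epsilon}^{2/3}/h$, which multiplied by $hS_4^{\tau} = \mathcal{O}(h)$ yields $\mathcal{O}(\overline{\epsilon}^{2/3})$. Summing gives $\|F'_{\tau}-F'\| = \mathcal{O}\bigl(h^2 + \overline{\epsilon}^{2/3} + h^3\overline{\epsilon}^{1/2}\bigr)$, which is the linear coefficient appearing in the theorem.

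The main obstacle is keeping the bookkeeping for $F'_{\tau}-F'$ clean: one must track the $1/h$ that appears in the finite-difference approximation of $D_2\overline{\nabla}_{\text{SIA}}H$ (because the SIA gradient itself carries $1/h_i$ factors) and verify that it is absorbed by the explicit $h$ in front of the Jacobian term, and one must also justify that $\partial_{\hat x}S_4 = \mathcal{O}(h)$ uniformly in a neighbourhood of $x^*$. Once those two points are in hand, combining the two $\mathcal{O}$-estimates with Kelley's inexact Newton bound yields the stated inequality with a constant $C$ depending on the Lipschitz constant and the invertibility of $F'$ near $x^*$.
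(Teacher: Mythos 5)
Your proposal follows essentially the same route as the paper's proof: invoke Kelley's Theorem 5.4.1 for inexact Newton iterations, bound $\|F_\tau - F\|$ via Lemma \ref{lem:err_in_S} and the boundedness of $\overline{\nabla}_{\text{SIA}}H$, and split $\|F'_\tau - F'\|$ into exactly the same three contributions (the omitted $\mathcal{O}(h^2)$ term from $\partial_{\hat x}S_4 = \mathcal{O}(h)$, the $S_4^\tau - S_4$ term, and the $D_2^\tau - D_2$ term whose $1/h$ factor is absorbed by the explicit $h$). The decomposition $\hat A\hat B - AB = \hat A(\hat B - B) + (\hat A - A)B$ you describe is precisely the one used in the paper, so no further comparison is needed.
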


\begin{proof}
    For inexact iterations by
    \begin{align*}
    \tilde F(x) &= F(x) + \epsilon(x) \\
    \tilde F'(x) &= F'(x) + \Delta(x)
\end{align*}
Theorem 5.4.1 in \cite{kelleyIterativeMethodsLinear1995a} states that the convergence of the iterations are determined by
\begin{equation*}
    \|x^{(i+1)} - x^*\| \leq C \bigg ( \|x^{(i)} - x^*\|^2 + \|\Delta(x^{(i)}) \| \|x^{(i)} - x^*\| +  \|\epsilon(x^{(i)})\| \bigg ).
\end{equation*}
In this specific case the exact  $F(\hx)$ is given by \eqref{eq:exact_sia_4_newton}, and from Lemma \ref{lem:err_in_S} and \eqref{df_dgm_4} it is clear that
\begin{equation}
     F_{\tau}(\hx) - F(\hx) =  \mathcal O\bigg (h\overline \epsilon^{\frac{2}{3}} + h^3\overline \epsilon^{\frac{1}{2}} \bigg ),
\end{equation}
meaning it is possible to find a constant $C_1 \geq 0$ such that
\begin{equation*}
     \|F_{\tau}(\hx) - F(\hx)\| \leq C_1  (h\overline \epsilon^{\frac{2}{3}} + h^3\overline \epsilon^{\frac{1}{2}}  ).
\end{equation*}
To simplify the notation, let us omit the arguments for $S_4$ and $H$ and write $D_2$ to represent $D_2\overline \nabla_{\text{SIA}}H$ and similarly $D_2^{\tau}$ for the finite difference approximation. For the Jacobian, we observe that
\begin{align*}
    \| F_{\tau}'(\hx) - F'(\hx) \| \leq& \; h \big \| S^{\tau}_4  D_2^{\tau} -S_4  D_2 \big \| + h \| \frac{\partial}{\partial \hx} S_4  \| \\
    =& \; h \big \| S^{\tau}_4  D_2^{\tau} -S_4  D_2 \big \| + \mathcal O(h^2)\\
    \leq& \;  h\| (S^{\tau}_4 - S_4 )D_2 \big \| + h\| S_4^{\tau}(D_2^{\tau} - D_2 ) \big \| + \mathcal O(h^2)\\
    =& \; \mathcal O( h\overline \epsilon^{\frac{2}{3}} + h^3\overline \epsilon^{\frac{1}{2}} + \overline \epsilon^{\frac{2}{3}} + h^2) \\
    =& \; \mathcal O(\overline \epsilon^{\frac{2}{3}}+ h^2 +  h^3\overline \epsilon^{\frac{1}{2}}  ).
\end{align*}
The first equality follows from
\begin{equation*}
     \frac{\partial}{\partial \hx}S_4 = \frac{h}{2}S \frac{\partial}{\partial \hx} \big(Q(x,\frac{1}{3}x+\frac{2}{3}\hx)-Q(\hx,\frac{2}{3}x+\frac{1}{3}\hx)\big) S + \mathcal O(h^2).
\end{equation*}
To derive the second inequality, we have used that for matrices $A,B$ and $\hat A,\hat B$,
\begin{align*}
    \hat A \hat B -  A  B &= \hat A \hat B - \hat AB + \hat A B  -  A  B  \\
    &= \hat A(\hat B- B) + (\hat A- A) B.
\end{align*}
The leading order terms of $S_4^{\tau} - S_4$ are due to \eqref{eq:S_df_error}, while those for $D_2^{\tau} - D_2$ are due to \eqref{midpoint_sia_fdm} and \eqref{first_finite_pres}.

\end{proof}

\subsection{Computational complexity}

The computational complexity of the different Itoh--Abe methods could be estimated by considering the number of evaluations of the Hamiltonian needed to perform one time-step. We make the simplifying assumption that the same number of iterations are needed for the three methods to reach convergence for Newton's method. A more thorough analysis would consider the constant terms governing the convergence, i.e.\ studying $\|F'(x)\|$ for the different methods and how this could lead to varying numbers of iterations. However, in practice, for sufficiently small step sizes ($h < 0.1$), approximately the same number of iterations are required to converge. We thus consider the number of evaluations required for one inexact Newton iteration \eqref{eq:newton_iter}.

We note that the first and second order methods leave $S^{\tau} = S$ constant, while $S^{\tau}_4$ requires one Hessian evaluation and two evaluations of $D_2^{\tau}\overline \nabla H$, where the diagonal could be excluded due to skew-symmetry. In summary, the number of evaluations of $H(x)$ needed to perform one Newton iteration is found in Table \ref{tab:eval_number}.

 \begin{table}[hb]

 \begin{center}
 \begin{tabular}{l l | l  l l | l}
 \toprule
    Method &Order&$ \overline \nabla H$ &  $D_2^{\tau} \overline \nabla H$ &   $S^{\tau}$ & Total \\ 
 \midrule
     IA &$1$& $2n$  & $2(n^2 + n)$ & $0$ & $2n^2 +4n$  \\
     SIA &$2$& $4n$ & $4(n^2 + n)$  & $0$ & $4n^2 + 8n$  \\
     SIA $4$ & $4$ & $4n$ &  $4(n^2 + n)$ & $9n^2 -5n + 1$ &  $13n^2 + 3n + 1$  \\
 \bottomrule
 \end{tabular}
\vskip 0.1in
 \caption{Numbers of evaluations of  $H(x)$ for the different derivative-free methods.}
 \label{tab:eval_number}
 \end{center}
 \end{table}

\section{Numerical experiments}

\subsection{Convergence of $S_4^{\tau}$, $F_{\tau}$ and $F'_{\tau}$}

\begin{figure}[!htb]
    \centering
    \includegraphics[width=0.49\textwidth]{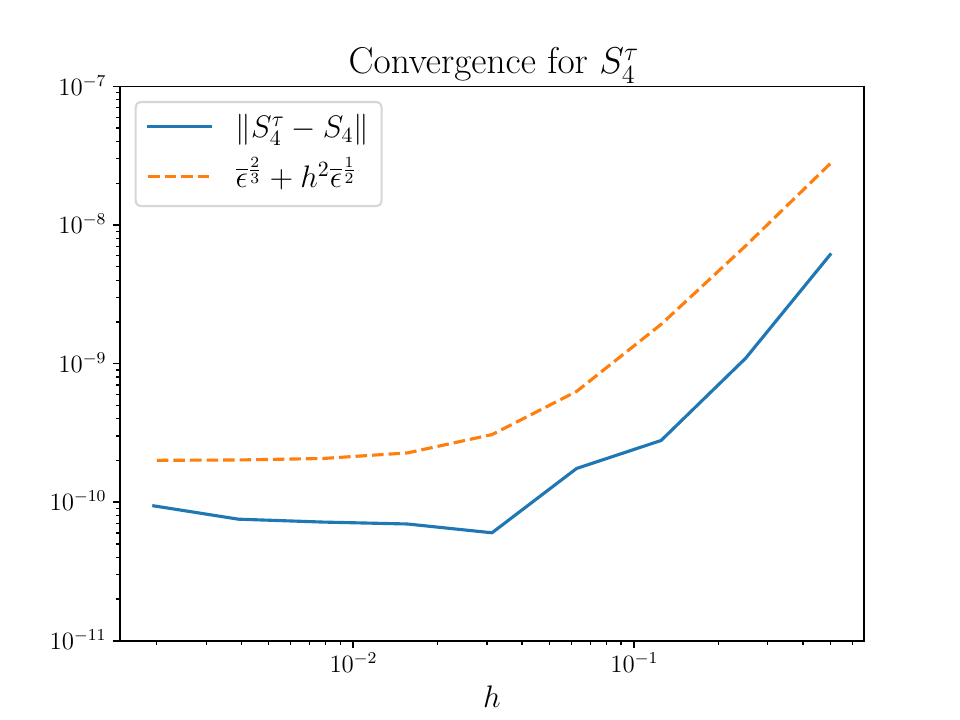}
    \includegraphics[width=0.49\textwidth]{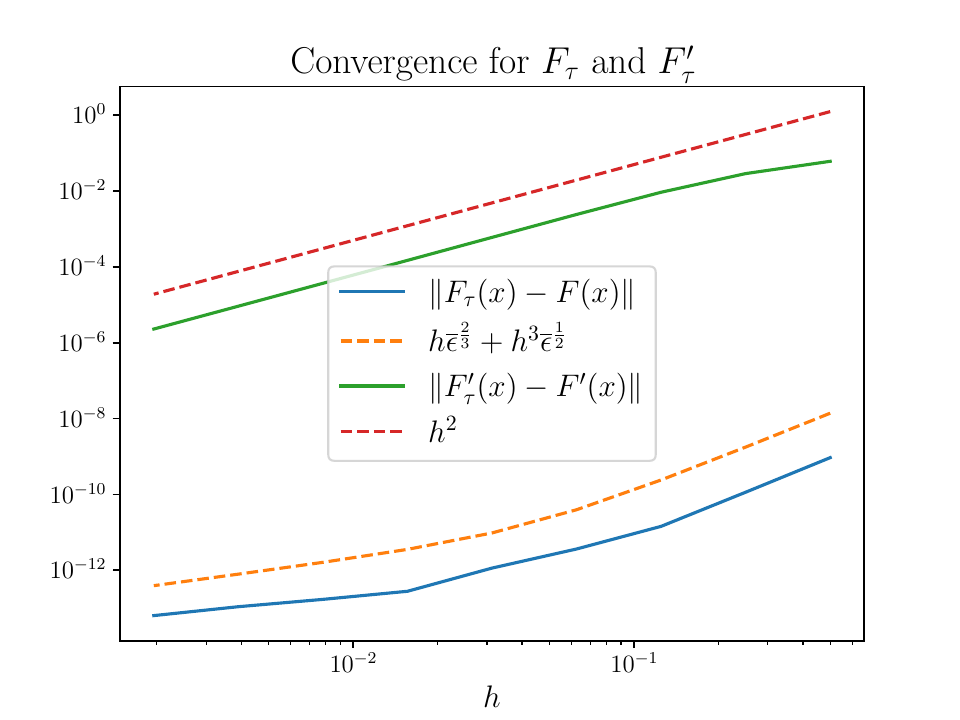}
    \caption{
        Numerical and theoretical convergence of $S_{\tau}$, $F_{\tau}$ and $F'_{\tau}$ when the step size in time $h$ decreases. Dotted lines represent the theoretical convergence rates (Lemma \ref{lem:err_in_S} and Theorem \ref{thm:df_dgm_4_newton}) and the solid lines the numerical results.
    }
    \label{fig:convergence_S_F}
\end{figure}

First, we investigate numerically the error of $S_4^{\tau}, F_{\tau}$ and $F'_{\tau}$ to test the theoretical convergence results in Lemma \ref{lem:err_in_S} and Theorem \ref{thm:df_dgm_4_newton}. This is achieved by comparing the finite difference approximations $S_4^{\tau}, F_{\tau}$ and $F'_{\tau}$ to $S_4, F$ and $F'$, which are computed using automatic differentiation provided by the Python package Autograd. This numerical experiment is performed on the double pendulum Hamiltonian presented in the next section and the results are shown in Figure \ref{fig:convergence_S_F}. By using double precision floating point numbers, we have that $\overline \epsilon \approx 10^{-15}$ and thus set the finite difference step-sizes to be $\tau_1 = 10^{-5}$ and $\tau_2 = 10^{-4}$. By considering the leading order terms in the errors of $S_4^{\tau}, F_{\tau}$ and $F'_{\tau}$ plotted with dotted lines in Figure \ref{fig:convergence_S_F} we see that the errors are in agreement with the theoretical results in Lemma \ref{lem:err_in_S} and Theorem \ref{thm:df_dgm_4_newton}.

\subsection{Convergence of numerical solution}

\begin{figure}[htb]
    \centering
    \includegraphics[trim = 0 130 0 0, clip, width=0.49\textwidth]{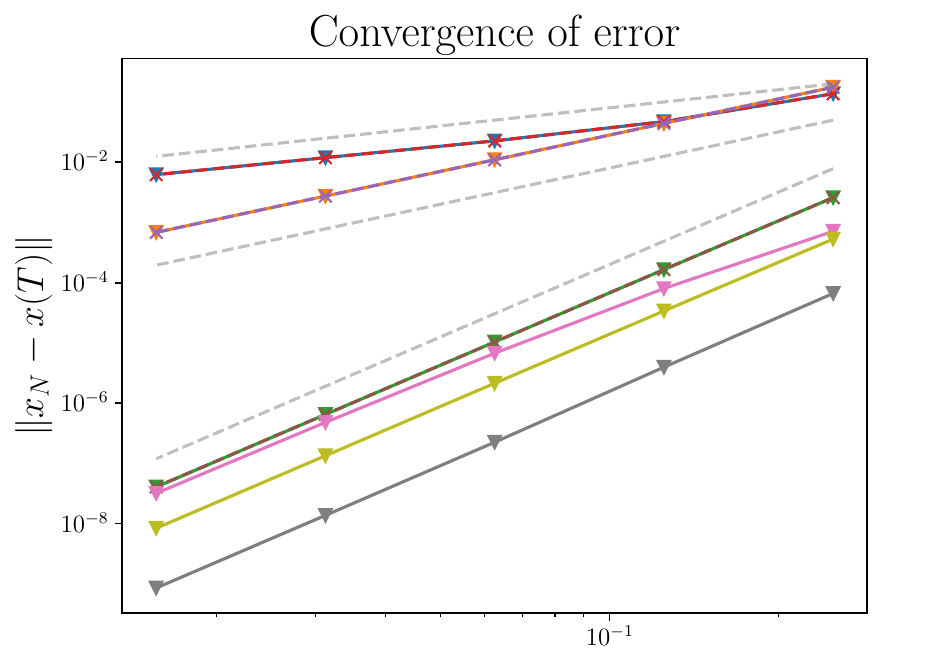}
    \includegraphics[trim = 0 130 0 0, clip,width=0.49\textwidth]{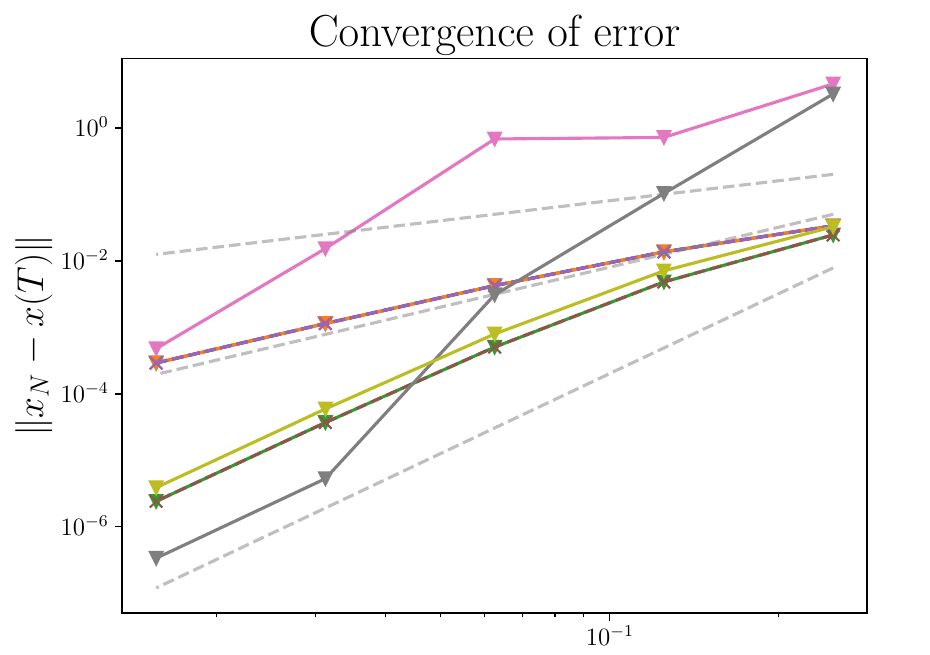}
    \includegraphics[trim = 0 130 0 0, clip,width=0.49\textwidth]{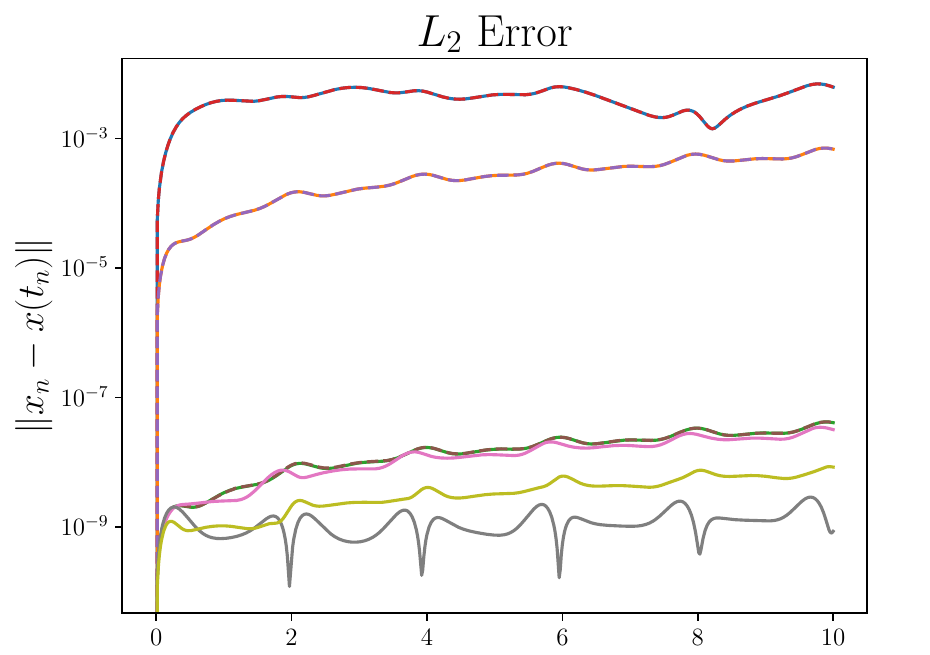}
    \includegraphics[trim = 0 130 0 0, clip,width=0.49\textwidth]{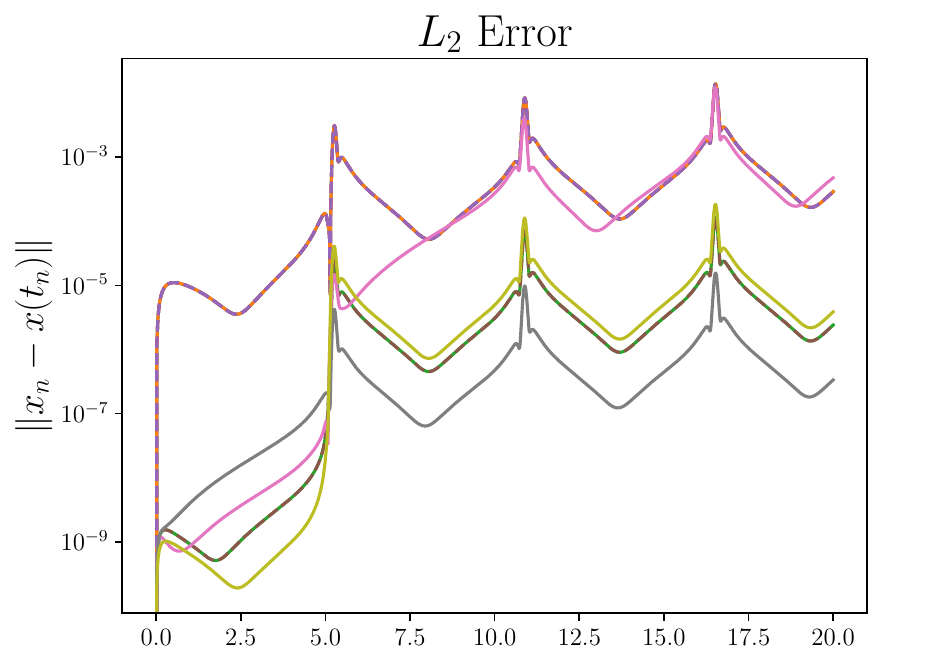}
    \includegraphics[trim = 5 0 5 345, clip,width=0.5\textwidth]{figures/convergence_ljo.pdf}
    \caption{
        Error results for the double pendulum (left column) and the Lennard--Jones oscillator (right column). The top row shows the global error at end time, $\|x_N - x(T)\|_2$, plotted against the step size $h$. The bottom row shows the $L_2$ error over time, $\|x_n - x(t_n)\|_2$, where the smallest step size was used. Dashed lines display the derivative-free (DF) methods. The gray dotted lines represent order $h^1$, $h^2$ and $h^4$ respectively.
    }
    \label{fig:error_convergence_double_pendulum}
\end{figure}

We now consider the convergence of the numerical solution of the double pendulum problem and the Lennard--Jones oscillator. The double pendulum system has a Hamiltonian that is not separable, where  $x=[q_1,q_2,p_1,p_2]^T \in \R^4$ and the Hamiltonian is given by
\begin{equation*}
H(q_1,q_2,p_1,p_2) = \frac{\frac{1}{2}p_1^2 + p_2^2 - p_1p_2\cos(q_1-q_2)}{1+\sin^2(q_1-q_2)} - 2\cos(q_1) -\cos(q_2).
\end{equation*}
Here, $q_i$ and $p_i$ denote the angle and angular momentum of pendulum $i = 1,2$. The Lennard--Jones oscillator has a potential function $U(q)$ that is used to model repulsion and attraction in molecular dynamics \cite{johnsonLennardJonesEquationState1993}. The oscillator on Hamiltonian form exhibits dynamics with abrupt changes, making energy preservation in discrete time challenging \cite{leimkuhlerSimulatingHamiltonianDynamics2005}. The Hamiltonian is given by 
\begin{equation*}
    H(q_1,p_1) = \frac{1}{2}p_1^2 + U(q_1) = \frac{1}{2}p_1^2 + \frac{1}{4}\bigg(\frac{1}{q_1^{12}} - \frac{2}{q_1^6}\bigg).
\end{equation*}
Here, $q_1$ and $p_1$ denote the position and momentum of the oscillator. The initial conditions for the double pendulum and the Lennard--Jones oscillator are given by $x_0 = [0.1,0.2,0.25,-0.3]^T$ and $x_0 = [1.21,0.34]^T$, respectively.

\begin{figure}[!htb]
    \centering
    \includegraphics[width=0.49\textwidth]{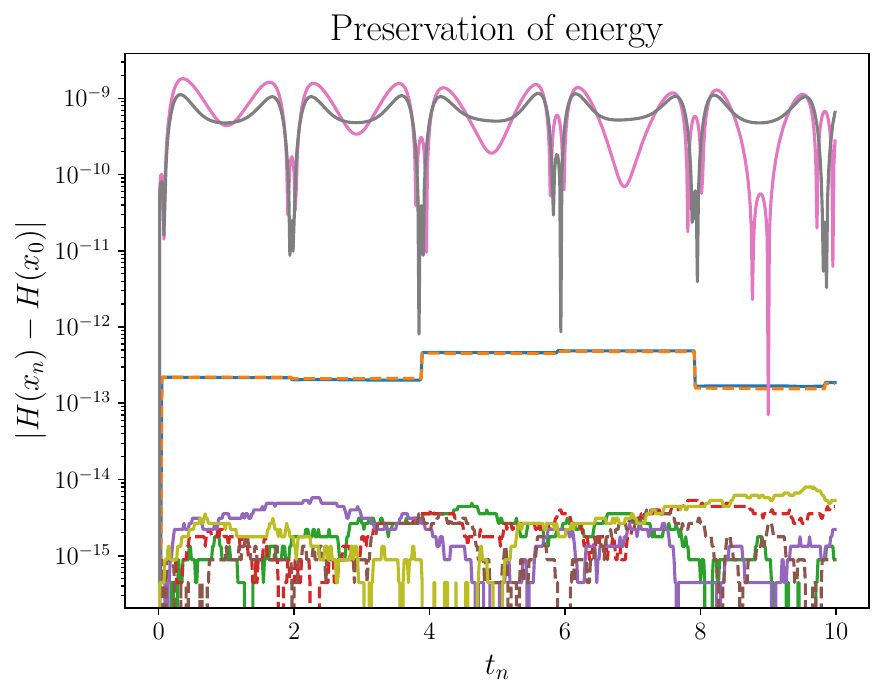}
    \includegraphics[width=0.49\textwidth]{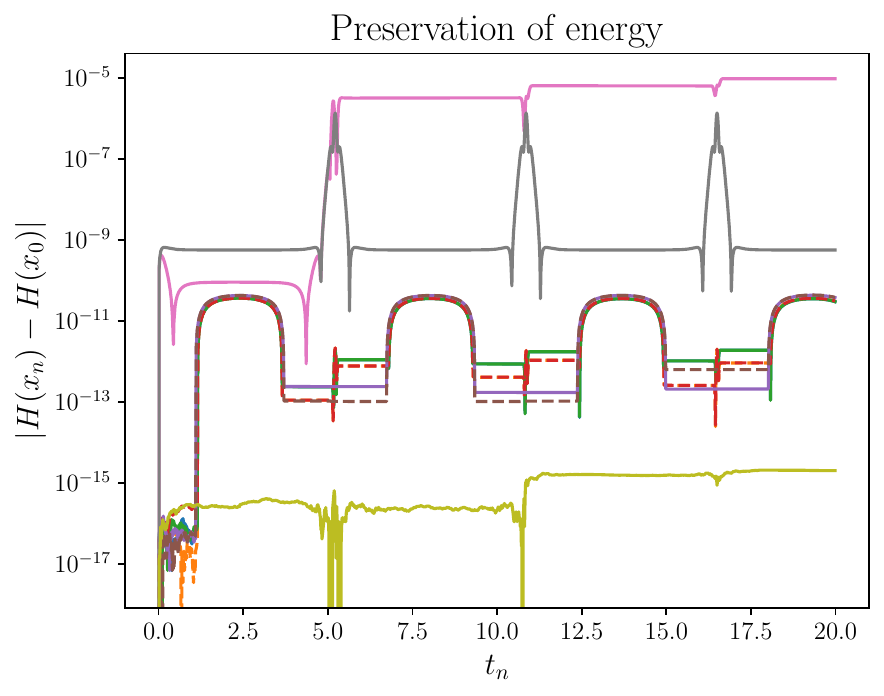}
    \includegraphics[trim = 5 0 5 345, clip,width=0.5\textwidth]{figures/l2error_double_pendulum.pdf}
    \caption{
        Energy error for the double pendulum (left) and the Lennard--Jones oscillator (right) plotted over time where the smallest step size was used. Dashed lines represent the derivative-free (DF) methods.
    }
    \label{fig:energy_error_double_pendulum}
\end{figure}

We compare the derivative-free DGM in \eqref{df_dgm_4}, denoted as SIA $4$ DF, against the similar method (SIA $4$) where the only difference is that the Hessian and the derivative of the discrete gradient is computed using automatic differentiation. Similarly, we compare derivative-free first-order Itoh--Abe method of Definition \ref{def:itoh_abe_method} (IA DF) and the symmetrized method (SIA DF) against implementations using automatic differentiation denoted as IA and SIA in the figures. Both methods require no modifications of the skew-symmetric matrix $S$ and are solved with Newton iterations using 
\begin{equation*}
    \begin{aligned}
        F_{\tau}(\hx) :=& \; \hx - x - h S \overline \nabla_{\text{DG}} H(x,\hx)\\
        \quad F_{\tau}'(\hx) :=& \; I - h S  D_2^{\tau}\overline \nabla_{\text{DG}} H(x,\hx),
    \end{aligned}
    \end{equation*}
where $\text{DG} \in \{\text{IA},\text{SIA} \}$ and $D_2^{\tau}\overline \nabla_{\text{DG}} H(x,\hx)$ is the midpoint finite difference approximation of the derivative of the discrete gradient with respect to\ its second argument, as derived in Section \ref{ch:derivative_of_SIA} for the SIA method. The same methodology is applied when deriving the finite difference scheme for the IA discrete gradient. Additionally, we compare to the classic $4$-stage Runge--Kutta method (RK4) \cite[p. 138]{hairerSolvingOrdinaryDifferential2000}, the fourth-order DGM method presented in \cite{eidnes2022order} (DGM$4$) in addition to the fourth order, three stage, mono-implicit Runge--Kutta method (MIRK$4$) found in \cite{muirOptimalDiscreteContinuous1999}.

The convergence order is investigated numerically by keeping the end time $T$ fixed, decreasing the step size $h = \frac{T}{N}$ and computing the global error $\|x_N - x(T)\|_2$ for the various methods, where $x_N$ is the numerical approximation of the solution at time $t_N=T$. The results are presented in Figure \ref{fig:error_convergence_double_pendulum}, together with the $L_2$ error over time, $\|x_n - x(t_n)\|_2$, and the energy error $|H(x_0) - H(x_n)|$ is shown in Figure \ref{fig:energy_error_double_pendulum}. Furthermore, the number of evaluations of $H(x)$ are presented in Figure \ref{fig:computational_time_double_pendulum} and computational time is compared to accuracy in the work-precision diagram found in Figure \ref{fig:work_precision}. The convergence results are in line with what to expect from the theoretical discussion and it is clear that the derivative-free methods have the same precision as the same methods based on automatic differentiation. However, considering the computational time of the different methods used, the derivative-free methods are significantly faster, which is made clear from studying the work-precision diagram in Figure \ref{fig:work_precision}. In particular, we observe that the derivative-free methods are approximately $50$ times faster than the methods relying on exact derivatives. However, the methods based on finite differences require a significantly higher number of evaluations of the Hamiltonian $H(x)$. Hence, if the Hamiltonian was more computationally costly to evaluate, these results may look different. 

\begin{figure}[!htb]
    \centering
    \includegraphics[width=0.49\textwidth]{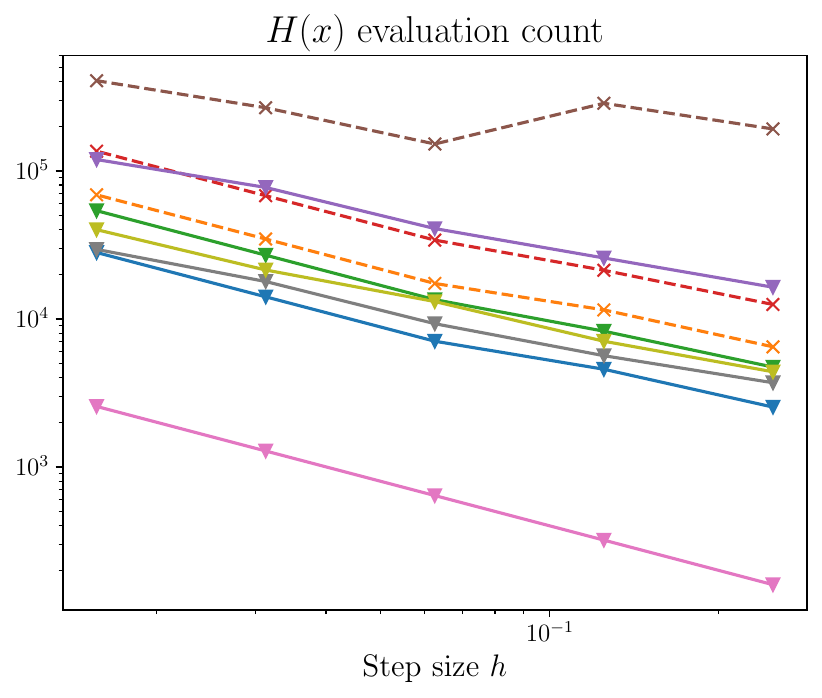}
    \includegraphics[width=0.49\textwidth]{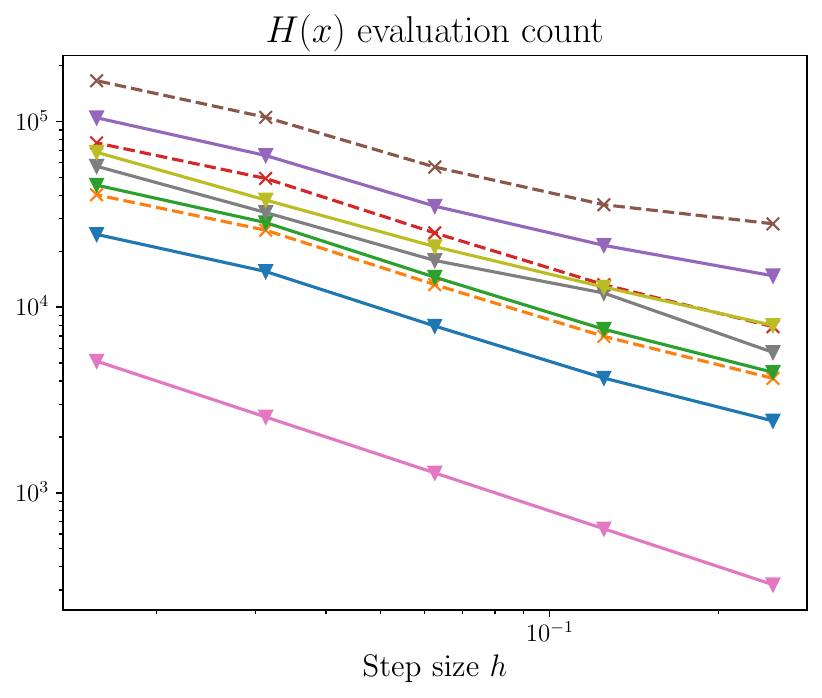}
    \includegraphics[trim = 5 0 5 345, clip,width=0.5\textwidth]{figures/convergence_double_pendulum.pdf}
    \caption{
        Numbers of evaluations of $H(x)$  for the double pendulum (left) and the Lennard--Jones oscillator (right) over multiple experiments with different $h$.
    }
    \label{fig:computational_time_double_pendulum}
\end{figure}

\begin{figure}[!htb]
    \centering
    \includegraphics[width=0.49\textwidth]{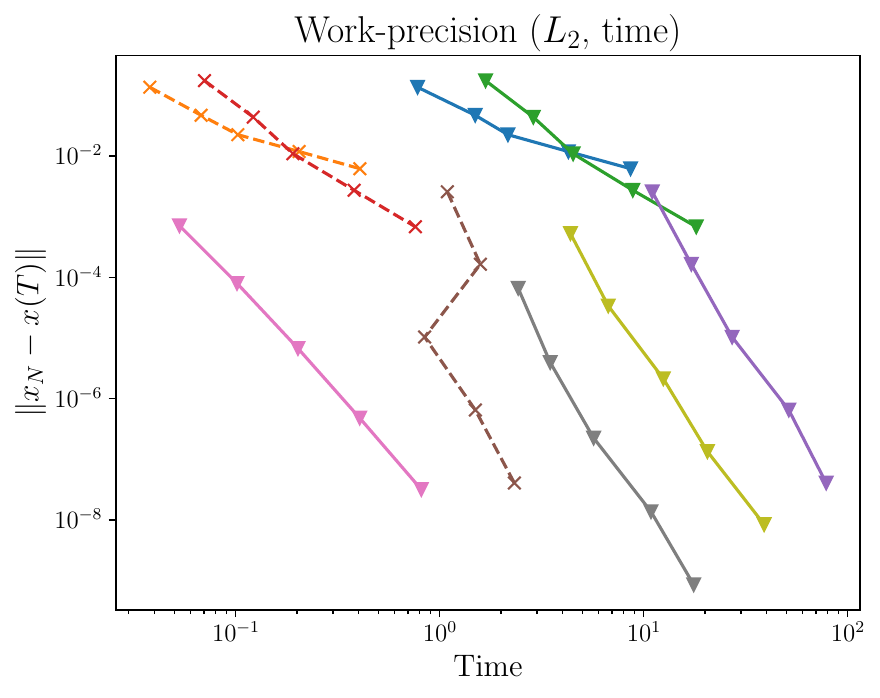}
    \includegraphics[width=0.49\textwidth]{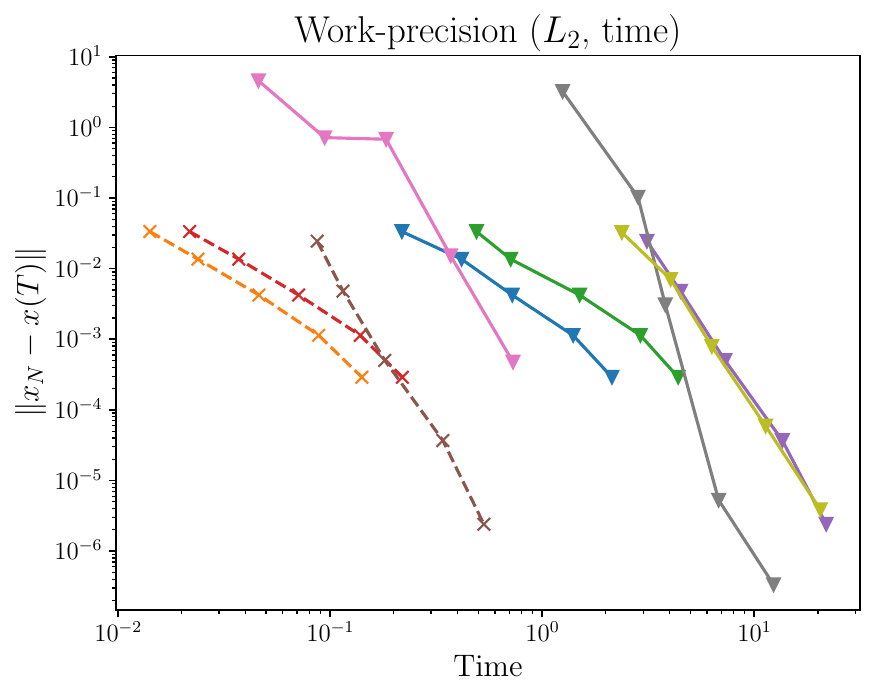}
    \includegraphics[width=0.49\textwidth]{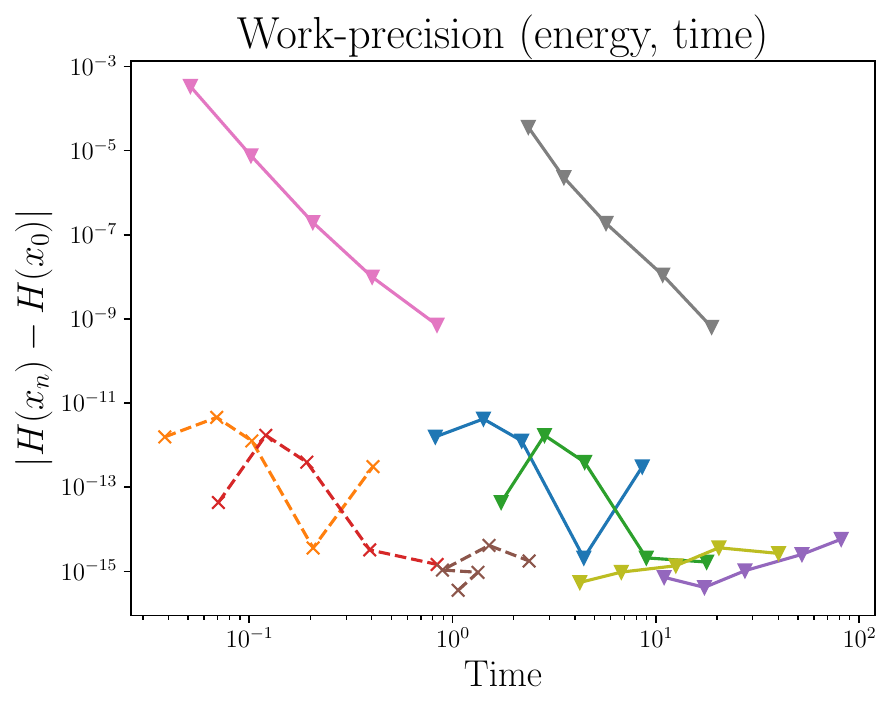}
    \includegraphics[width=0.49\textwidth]{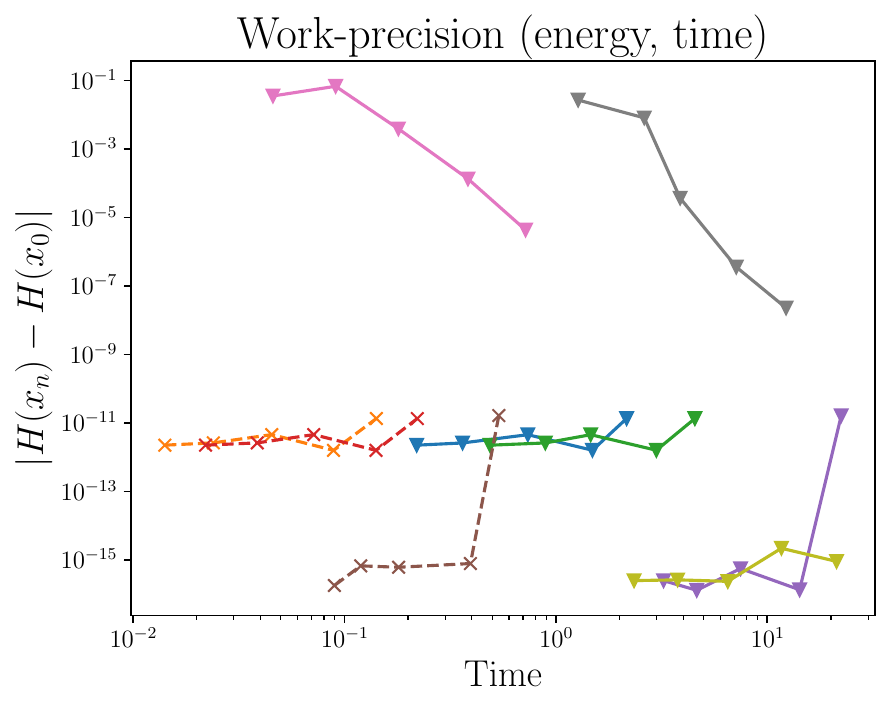}
    \includegraphics[trim = 5 0 5 345, clip,width=0.5\textwidth]{figures/convergence_double_pendulum.pdf}
    \caption{
        Work-precision diagrams where the $L_2$ (top row) and energy error (bottom row) is plotted against computational time for the double pendulum (left column) and the Lennard--Jones oscillator (right column) over multiple experiments with different step size in time $h$. Dashed lines display the derivative-free (DF) methods.
    }
    \label{fig:work_precision}
\end{figure}

\subsection{Implementation details}

For the discrete gradients, we have that $\overline \nabla H(x,x) = \nabla H(x)$. Hence, for constructing an algorithm that is gradient-free, we need to initialize the Newton iterations with $x_{n+1}^{(0)} \neq x_n$. Integrating a trajectory $x_n \approx x(t_n)$ for $n = 0,\dots,N$, we initialize the iterations with 

\begin{align*}
 x_{n+1}^{(0)} =& \begin{cases}
         x_n + h\delta \quad &\text{if} \; n = 0\\
        x_n + x_{n} - x_{n-1} \quad &\text{else}.\\
    \end{cases}
\end{align*}
 Here, $\delta$ is a sample from a standard normal distribution $\delta \sim \mathcal N(0, I)$. For $n\neq 0$, using $ x_{n+1}^{(0)}  = x_n + x_{n} - x_{n-1}$ is equivalent with assuming that the vector field $f(x(t))$ is constant for $t \in [t_{n-1},t_{n}]$.

 The Newton iterations are terminated when $\|F(x_{n+1}^{(i)})\|\leq \text{TOL} = 10^{-11}$ or the number of iterations satisfies $i>20$. The code for the implementation of the numerical experiments can be found on GitHub. \footnote{\url{https://github.com/hakonnoren/derivative_free_discrete_gradients}}

 \subsection{Topographic Hamiltonian} \label{ch:topographic_hamiltonian}

 \begin{figure}[!htb]
    \centering
    \includegraphics[width=0.49\textwidth]{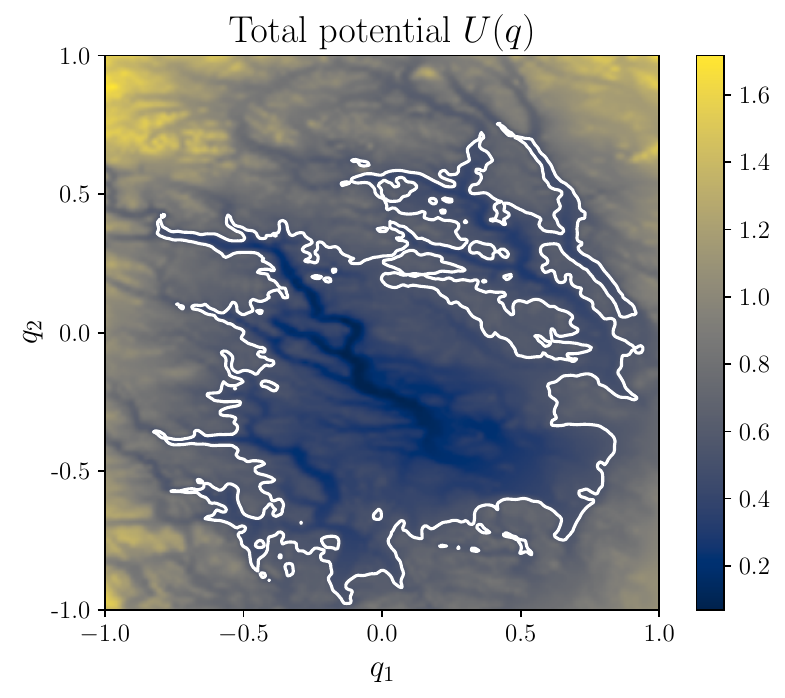}
    \includegraphics[width=0.49\textwidth]{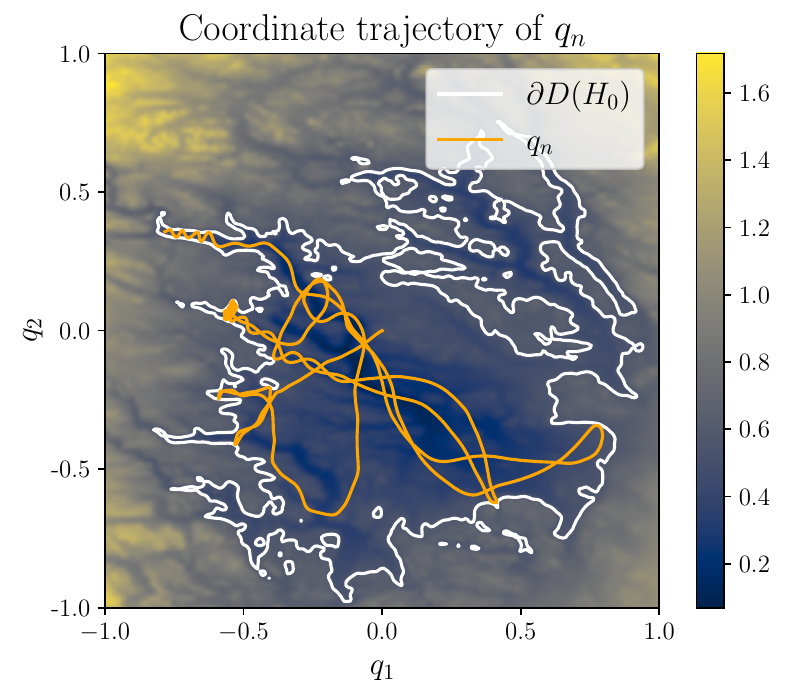}
    \includegraphics[width=0.49\textwidth]{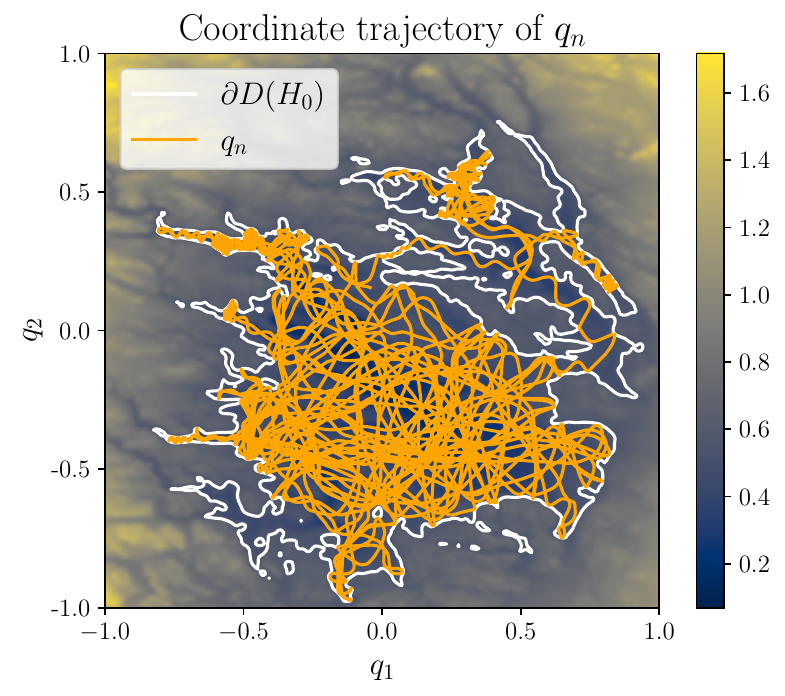}
    \includegraphics[width=0.49\textwidth]{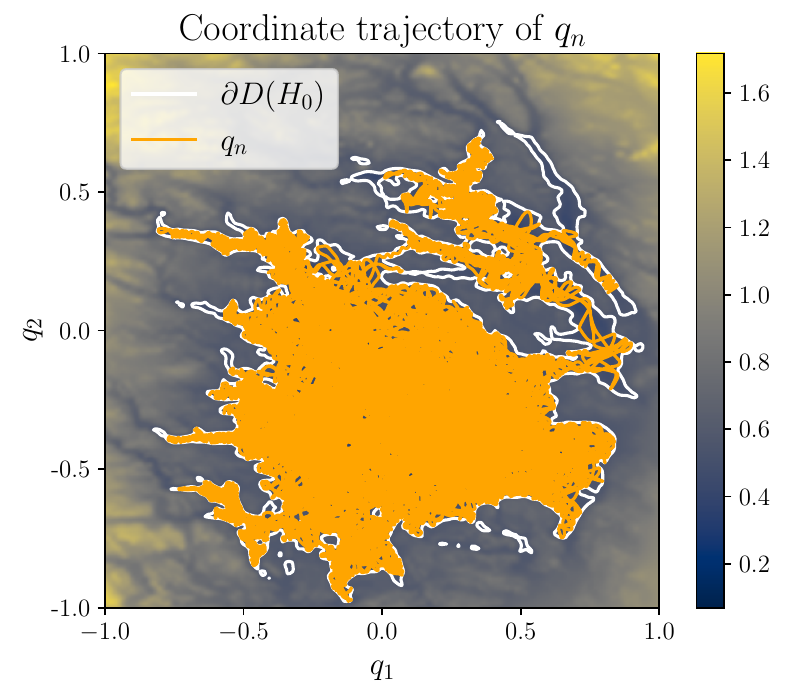}
    \caption{
    The total potential $U(q)$ (upper left) and the trajectory of the topographic Hamiltonian $q_n \approx q(t_n)$ in discrete time for $t_n \leq t_N$ with $t_N = 20,200,1000$ from the upper to the lower right quadrant. $\partial D(H_0)$ is the level set of the initial energy $H(q_0,p_0)$. The color in the heatmap corresponds to the value of the total potential $U(q)$.
    }
    \label{fig:topo_trajectory}
\end{figure}
To demonstrate how the derivative-free DGM allows for the numerical integration of Hamiltonians where exact differentiation might be more involved or inaccessible, we will consider the following Hamiltonian with a potential that is the interpolation of topographic (altitude) data. Let $q \in D := [-1,1] \times [-1,1] \subset \R^2$ be coordinates and $p\in\R^{2}$ the generalized momenta. Then we let the topographic Hamiltonian be defined as
\begin{equation}
    H(q,p) = U_{\text{top}}(q) + \frac{1}{2}(q^Tq + p^Tp),
\end{equation}
where $U_{\text{top}} : D \rightarrow [0,1]$ maps coordinates $q \in D$ to a normalized and interpolated altitude value $[0,1]$. Note that the total potential is given by 
\begin{equation*}
    U(q) = U_{\text{top}}(q) + \frac{1}{2}q^Tq.
\end{equation*}
Let now $q(t) \in D$ be a coordinate trajectory of the Hamiltonian system with initial condition $q(t_0) = q_0$ and $p(t_0) = p_0$. Since the Hamiltonian is conserved along the trajectory $q(t),p(t)$ the total potential energy $U(q)$ is bounded by $H(q_0,p_0)$, meaning that the coordinate trajectory $q(t)$ is confined to
\begin{equation}
    D(H_0) := \{q \in \R^2 \; \big | \; U(q)  \leq H(q_0,p_0)\}.
\end{equation}
Assuming $q_0,p_0$ is chosen such that $D(H_0) \subset D$, $q(t)$ will always remain within the domain of the interpolant $U_{\text{top}}$. When discretizing the Hamiltonian dynamics in time, a discrete gradient method will, by conserving the energy, ensure the stability of the solutions in the sense that $q_n \in D(H_0) \subset D$ for all $n$.

\begin{figure}[!htb]
    \centering
    \includegraphics[width=0.65\textwidth]{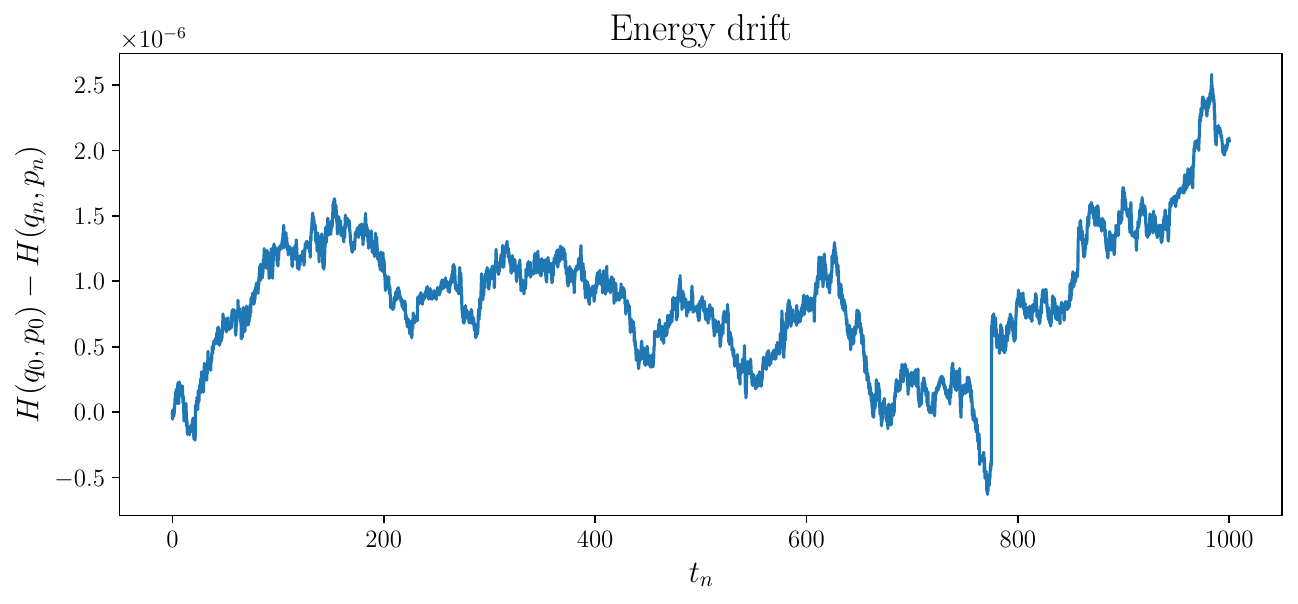}
    \caption{
        Energy drift of the topographic Hamiltonian in discrete time.
    }
    \label{fig:topo_energy_drift}
\end{figure}

By using PyGMT \cite{uieda_leonardo_2023_7772533}, a Python interface for the Generic Mapping Tools \cite{wessel2019generic}, topographic data of an area approximately $222$ km by $222$ km centered around Ilsetra in Norway approximately at degrees $(61.221^{\circ}, 10.525^{\circ})$ is obtained pointwise in a matrix $Q \in \R^{122 \times 122}$. The potential $U_{\text{top}}$ is constructed by interpolating $Q$ with a cubic spline that is normalized such that $U_{\text{top}}(q) \in [0,1]$ for all $q \in D$. The interpolation is done using the SciPy library \cite{2020SciPy-NMeth}. 

A numerical experiment is demonstrated by integrating the Hamiltonian system $N = 50\;000$ steps with $h = 0.02$ starting at $[q_0,p_0]^T = [0,0,-0.1,0.2]^T$. The initial energy is $H(q_0,p_0) = 0.5598$ and the derivative-free symmetrized Itoh--Abe method (SIA DF) is used and solved with a tolerance of $\text{TOL} = 10^{-7}$. In Figure \ref{fig:topo_trajectory} the trajectory $q_n$ is plotted on a colormap of the total potential $U(q)$ in addition to the boundary of $D(H_0)$ denoted as $\partial D(H_0)$. Figure \ref{fig:topo_energy_drift} shows the energy drift over time. 

From Figure \ref{fig:topo_trajectory} we observe that the discrete trajectory $q_n$ is confined to $D(H_0)$, and the energy drift in Figure \ref{fig:topo_energy_drift} is roughly of the same order as the tolerance of the non-linear solver. While it is feasible to compute the gradient of a cubic spline such as $U_{\text{top}}(q)$, this would in practice require a separate implementation either of the gradient directly, or implementing the interpolation algorithm with a library supporting automatic differentiation. In this regard, the derivative-free methods could be seen as non-intrusive, meaning numerical integration is enabled without larger changes to existing numerical algorithms that are involved when computing the Hamiltonian.

\section{Conclusion}

The derivative-free discrete gradient methods presented in this paper are realized by leveraging the symmetrized Itoh--Abe method and the order theory for discrete gradients in \cite{eidnes2022order}, in addition to finite differences that replace the derivatives involved in $S_4(x,\hx,h)$ and the Jacobian $F'(\hx)$ needed for iterations of Newton's method. Theorem \ref{thm:df_dgm_4_newton} shows that the inaccuracy introduced due to the finite difference approximations is negligible when the finite precision evaluation of the Hamiltonian is only due to floating point precision (where $\overline \epsilon \approx 10^{-15}$). This is confirmed in the numerical experiments where SIA $4$ DF is comparable in accuracy to SIA $4$, where the derivatives are computed using automatic differentiation. However, the derivative-free methods are around $50$ times faster in the experiments presented here. Additionally, we have demonstrated how the derivative-free method allows for the numerical integration of Hamiltonians where differentiation might be more involved. Among the potential future applications of our method is the training of Hamiltonian neural networks \cite{greydanus2019hamiltonian}, where the gradient of the approximated Hamiltonian needs to be computed in every training step. Additionally, the methods presented could be leveraged when sampling from distributions where gradients are expensive or inaccessible, extending the Conservative Hamiltonian Monte Carlo algorithm \cite{mcgregorConservativeHamiltonianMonte2022}.

\bibliographystyle{amsplain}
\bibliography{ref}
\end{document}